\theoremstyle{plain}
\newtheorem{theorem}{Theorem}[section]
\newtheorem{lemma}[theorem]{Lemma}
\newtheorem{example}[theorem]{Example}
\newtheorem{corollary}[theorem]{Corollary}
\newtheorem{proposition}[theorem]{Proposition}
\newcommand{\R}{\mathbb{R}}
\newcommand{\N}{\mathbb{N}}
\theoremstyle{remark}
\newtheorem*{remark}{Remark}
\newcommand{\reg}{\text{Reg}(R)}
\newcommand{\treg}{\tau_{\text{reg}}}
\begin{document}

\title{$\tau$-Regular Factorization in Commutative Rings with Zero-Divisors}
        \date{\today}

\author{Christopher Park Mooney}
\address{R016 Reinhart Center, Dept. of Mathematics, Viterbo University, La Crosse, WI 54601}
\email{cpmooney@viterbo.edu}

\keywords{factorization, zero-divisors, commutative rings, regular and U-factorization}

\begin{abstract}
Recently there has been a flurry of research on generalized factorization techniques in both integral domains and rings with zero-divisors, namely $\tau$-factorization.  There are several ways that authors have studied factorization in rings with zero-divisors.  This paper focuses on the method of regular factorizations introduced by D.D. Anderson and S. Valdes-Leon.  We investigate how one can extend the notion of $\tau$-factorization to commutative rings with zero-divisors by using the regular factorization approach.  The study of regular factorization is particularly effective because the distinct notions of associate and irreducible coincide for regular elements.  We also note that the popular U-factorization developed by C.R. Fletcher also coincides since every regular divisor is essential.  This will greatly simplify many of the cumbersome finite factorization definitions that exist in the literature when studying factorization in rings with zero-divisors.  
%We will also use the study of $\tau$-irreducible divisor graphs to get several $\tau$-regular finite factorization results.  
\\
\vspace{.1in}\noindent \textbf{2010 AMS Subject Classification:} 13A05, 13E99, 13F15
\end{abstract}
\maketitle
\section{Introduction}  
\indent \indent There has been a considerable amount of research done on the factorization properties of commutative rings, especially domains.  Unique factorization domains (UFDs) are well understood and have been studied extensively over the years.  More recently, many authors have studied rings which satisfy various weakenings of the UFD conditions.  These factorization properties of domains have been extended in several distinct ways to rings with zero-divisors.  Traditionally, in the domain case, authors have studied prime or irreducible factorizations. More recently, research has been done on generalizing the types of factorizations that have been studied to include things like co-maximal factorizations or using $\star$-operations to generalize factorization.  
\\
\indent Of particular interest to the current article is the 2011 work of D.D. Anderson and A. Frazier.  This is a survey article, \cite{Frazier}, on the study of factorization in domains in which the authors introduce $\tau$-factorization.  The use of $\tau$-factorization yields a beautiful synthesis of many of these generalizations of factorizations studied in the integral domain case.  In many ways, this article was able to consolidate all of the factorization research in integral domains into a single method of studying factorization.  Recently, the author has begun to study methods of extending this powerful approach of $\tau$-factorization to the case of a commutative ring with zero-divisors.  Because of the numerous approaches that have been taken to study factorization in rings with zero-divisors, this has led to many approaches to extending $\tau$-factorization.  
\\
\indent In \cite{Mooney}, the author used the methods established by D.D. Anderson and S. Valdes-Leon in \cite{Valdezleon} to extend many of the $\tau$-factorization definitions to work also in rings with zero-divisors.  In \cite{Mooney2}, the author investigated extending $\tau$-factorization using the notion of U-factorizations developed first by C.R. Fletcher in \cite{Fletcher, Fletcher2} and then studied extensively by M. Axtell, N. Baeth, and J. Stickles in \cite{Axtell, Axtell2}.   In \cite{Mooneycomplete}, the author studied yet another approach to extending $\tau$-factorization, by using complete factorizations which was touched on in \cite{Frazier} in the case of integral domains.  
\\
\indent In the present article, we concentrate on the approach studied in \cite[Section 5]{Valdezleon2} in which D.D. Anderson and S. Valdes-Leon study what was called regular factorization.  This approach takes advantage of the fact that for regular elements, all of the traditionally distinct associate relations and irreducible elements behave as they do in integral domains, where they all are equivalent once again.  We see that this approach will greatly simplify matters and in fact unifies many of the previous methods in \cite{Mooney, Mooney2}.
%\\
%\indent Another interesting recent development in the theory of factorization comes in the form of zero-divisor graphs.  In 1988, I. Beck in \cite{Beck}, introduced for a commutative ring $R$, the notion of a zero-divisor graph $\Gamma(R)$.  The vertices of $\Gamma(R)$ are the set of zero-divisors, and there is an edge between $a,b \in Z(R)$ if $ab=0$.  This has been studied and developed by many authors including, but not limited to D.D. Anderson, D.F. Anderson, A. Frazier, A. Lauve, P.S. Livingston, and M. Naseer in \cite{andersonzdg, davidanderson, Livingston}.  
%\\
%\indent Even more recently, this has led to research on irreducible divisor graphs.  Instead of looking exclusively at divisors of zero in a ring, instead restrict to a domain $D$ and choose any non-zero, non-unit $x\in D$ and study the relationships between the irreducible divisors of $x$.  In \cite{Axtellidgzd, Coykendall}, M. Axtell, N. Baeth, J. Coykendall, J. Maney, and J. Stickles present several nice results about factorization properties of domains based on their associated irreducible divisor graphs.  M. Axtell and J. Stickles have also studied irreducible divisor graphs in commutative rings with zero-divisors in \cite{Axtellidgd}.  The author has generalized these irreducible divisor graphs to $\tau$-irreducible divisor graphs in both integral domains in \cite{Mooney3} and rings with zero-divisors in \cite{Mooneyidgzd}.
\\
\indent  In Section \ref{sec: prelim}, we provide some necessary background definitions and theorems.  In Section \ref{sec: regular}, we develop many of the definitions of $\tau$-regular-factorization, $\tau$-regular irreducible elements as well as $\tau$-regular finite factorization properties that rings may have.  This is done by using the approach of D.D. Anderson and S. Valdez-Leon in \cite[Section 5]{Valdezleon2}, where they restrict their study of $\tau$-factorization to only the regular elements of a commutative ring with $1$.  In Section \ref{sec: tau-regular-results}, we prove several theorems which describe the relationships between the various $\tau$-regular finite factorization properties that rings may possess.  In Section \ref{sec: tau-regular}, we compare this new method of extending $\tau$-factorization with the previous work in \cite{Mooney} and the relation $\tau_r:=\tau \cap \reg \times \reg$.  In Section \ref{sec: other props}, we demonstrate how these $\tau$-regular finite factorization properties are related to other finite factorization properties defined in other works, especially \cite{Mooney} and \cite{Mooney2}. 

\section{Preliminary Definitions and Results} \label{sec: prelim}
\indent We will assume $R$ is a commutative ring with $1 \neq 0$.   Let $R^*=R-\{0\}$, let $U(R)$ be the set of units of $R$, and let $R^{\#}=R^*-U(R)$ be the non-zero, non-units of $R$.  As in \cite{Valdezleon}, we let $a \sim b$ if $(a)=(b)$, $a\approx b$ if there exists $\lambda \in U(R)$ such that $a=\lambda b$, and $a\cong b$ if (1) $a\sim b$ and (2) $a=b=0$ or if $a=rb$ for some $r\in R$ then $r\in U(R)$.  We say $a$ and $b$ are \emph{associates} (resp. \emph{strong associates, very strong associates}) if $a\sim b$ (resp. $a\approx b$, $a \cong b$).  As in \cite{Stickles}, a ring $R$ is said to be \emph{strongly associate} (resp. \emph{very strongly associate}) ring if for any $a,b \in R$, $a\sim b$ implies $a \approx b$ (resp. $a \cong b$).

\subsection{$\tau$-Factorization in Rings with Zero-Divisors}\ 
\\
\indent Let $\tau$ be a relation on $R^{\#}$, that is, $\tau \subseteq R^{\#} \times R^{\#}$.  We will always assume further that $\tau$ is symmetric.  For non-units $a, a_i \in R$, and $\lambda \in U(R)$, $a=\lambda a_1 \cdots a_n$ is said to be a \emph{$\tau$-factorization} if $a_i \tau a_i$ for all $i\neq j$.  If $n=1$, then this is said to be a trivial \emph{$\tau$-factorization}.  
\\
\indent As in \cite{Mooney}, we say $\tau$ is \emph{multiplicative} (resp. \emph{divisive}) if for $a,b,c \in R^{\#}$ (resp. $a,b,b' \in R^{\#}$), $a\tau b$ and $a\tau c$ imply $a\tau bc$ (resp. $a\tau b$ and $b'\mid b$ imply $a \tau b'$).  We say $\tau$ is \emph{associate} (resp. \emph{strongly associate}, \emph{very strongly associate) preserving} if for $a,b,b'\in R^{\#}$ with $b\sim b'$ (resp. $b\approx b'$, $b\cong b'$) $a\tau b$ implies $a\tau b'$.  A \emph{$\tau$-refinement} of a $\tau$-factorization $\lambda a_1 \cdots a_n$ is a $\tau$-factorization of the form
$$(\lambda \lambda_1 \cdots \lambda_n)b_{11}\cdots b_{1m_1}\cdot b_{21}\cdots b_{2m_2} \cdots b_{n1} \cdots b_{nm_n}$$
where $a_i=\lambda_ib_{i_1}\cdots b_{i_{m_i}}$ is a $\tau$-factorization for each $i$.  We say that $\tau$ is \emph{refinable} if every $\tau$-refinement of a $\tau$-factorization is a $\tau$-factorization.  We say $\tau$ is \emph{combinable} if whenever $\lambda a_1 \cdots a_n$ is a $\tau$-factorization, then so is each $\lambda a_1 \cdots a_{i-1}(a_ia_{i+1})a_{i+2}\cdots a_n$.  
\\
\indent We now pause to supply the reader with a few examples of particularly useful or interesting $\tau$-relations to give an idea of the power of $\tau$-factorization.

\begin{example} Let $R$ be a commutative ring with $1$.
\begin{enumerate}

\item $\tau=R^{\#}\times R^{\#}$.  This yields the usual factorizations in $R$ and $\mid_{\tau}$ is the same as the usual divides.  $\tau$ is multiplicative and divisive (hence associate preserving as we shall soon see).

\item $\tau=\emptyset$.  For every $a\in R^{\#}$, there is only the trivial factorization and $a\mid{_\tau} b \Leftrightarrow a=\lambda b$ for $\lambda \in U(R)$ $\Leftrightarrow a\approx b$.  Again $\tau$ is both multiplicative and divisive (vacuously).

\item Let $S$ be a non-empty subset of $R^{\#}$ and let $\tau=S\times S$, $a\tau b \Leftrightarrow a,b\in S$. So $\tau$ is multiplicative (resp. divisive) if and only if $S$ is multiplicatively closed (resp. closed under non-unit factors).  A non-trivial $\tau$-factorization is up to unit factors a factorization into elements from $S$.

%\item Let $I$ be an ideal of $R$ and define $a \tau b$ if and only if $a-b\in I$.  This relation is certainly symmetric, but need not be multiplicative or divisive.  Let $R=\Z$ and $I=(5)$.  Consider $7\tau 2$ and $7 \tau 7$, but $7 \not \tau 14$, and $9 \tau 4$, but $2 \mid 4$ yet $9\not \tau 2$.

\item Let $\star$ be a star-operation on $R$ and define $a\tau b \Leftrightarrow (a,b)^{\star}=R$, that is $a$ and $b$ are $\star$-coprime or $\star$-comaximal.  This particular operation has been studied more in depth by Jason Juett in \cite{Juettcomax}.  When $\star=d$, the identity star operation, we get the co-maximal factorizations of S. McAdam and R. Swan, in \cite{Mcadam}.

%\item Let $a\tau_n b \Leftrightarrow ab\neq 0$.  Notice in a domain, this is precisely $\tau = D^{\#} \times D^{\#}$, so this example is different from the usual factorization only for rings with zero-divisors.  $\tau$ is divisive, but not multiplicative.  Let $a\tau b$ and $a'\mid a$ and $b' \mid b$, say $a's=a$ and $b't=b$.  Then $a'sb't=ab\neq 0$, so certainly $a'b'\neq 0$ so $a'\tau b'$ as desired.  On the other hand, in $\Z/12\Z$ we have $2\tau 2$ and $2\tau 3$, but $2 \not \tau 6$. 

\item Let $a \tau_z b \Leftrightarrow ab=0$.  Then every $a\in R^{\#}$ is a $\tau$-atom.  The only nontrivial $\tau$-factorizations are $0=\lambda a_1 \cdot \ldots \cdot a_n$ where $a_i \cdot a_j=0$ for all $i \neq j$.  This example was studied extensively in \cite{Mooney} and has a close relationship with zero-divisor graphs.

\item Let $a\tau b \Leftrightarrow a,b\in\text{Reg}(R)$.  Then this gives us the regular factorization studied in \cite{Valdezleon3}.  This is the inspiration for Section \ref{sec: regular}.

\item Let $\tau \subseteq R^{\#}\times R^{\#}$, then we define $\tau_{reg}:=\tau \cap \left(Reg(R) \times Reg(R)\right)$.  Because the collection of regular elements is a saturated, multiplicatively closed set, this has the effect of only allowing trivial factorizations of the zero-divisors.  This is the type of $\tau$-factorization we would like to use to compare with the notion of $\tau$-factorizations by way of the regular factorizations studied in \cite{Valdezleon3}.  This will be studied more in depth in Section \ref{sec: tau-regular}.
\end{enumerate}
\end{example}
\indent We now summarize several of the definitions given in \cite{Mooney} and \cite{Mooneycomplete}.  Let $a\in R$ be a non-unit.  Then $a$ is said to be \emph{$\tau$-irreducible} or \emph{$\tau$-atomic} if for any $\tau$-factorization $a=\lambda a_1 \cdots a_n$, we have $a\sim a_i$ for some $i$.  We will say $a$ is \emph{$\tau$-strongly irreducible} or \emph{$\tau$-strongly atomic} if for any $\tau$-factorization $a=\lambda a_1 \cdots a_n$, we have $a \approx a_i$ for some $a_i$.  We will say that $a$ is \emph{$\tau$-m-irreducible} or \emph{$\tau$-m-atomic} if for any $\tau$-factorization $a=\lambda a_1 \cdots a_n$, we have $a \sim a_i$ for all $i$.  Note: the $m$ is for ``maximal" since such an $a$ is maximal among principal ideals generated by elements which occur as $\tau$-factors of $a$.  As in \cite{Mooneycomplete}, $a\in R$ is said to be a \emph{$\tau$-unrefinable atom} if $a$ admits only trivial $\tau$-factorizations.  We will say that $a$ is \emph{$\tau$-very strongly irreducible} or \emph{$\tau$-very strongly atomic} if $a\cong a$ and $a$ has no non-trivial $\tau$-factorizations.  See \cite{Mooney} and \cite{Mooneycomplete} for more equivalent definitions of these various forms of $\tau$-irreducibility.  

%A ring $R$ is said to be \emph{$\tau$-unrefinablly atomic} if for every non-unit $a\in R$, there is a $\tau$-factorization into $\tau$-unrefinable elements.
\indent We have the following relationship between the various types of $\tau$-irreducibles which is proved in \cite[Theorem 3.9]{Mooney} as well as \cite{Mooneycomplete}.
\begin{theorem}\label{thm: unrefinable} Let $R$ be a commutative ring with $1$ and $\tau$ be a symmetric relation on $R^{\#}$.  Let $a \in R$ be a non-unit.  The following diagram illustrates the relationship between the various types of $\tau$-irreducibles $a$ might satisfy where $\approx$ represents $R$ being a strongly associate ring.
$$\xymatrix{
\tau\text{-very strongly irred.} \ar@{=>}[r] & \tau\text{-unrefinably irred.}\ar@{=>}[dr] \ar@{=>}[r] & \tau\text{-strongly irred.} \ar@{=>}[r]& \tau \text{-irred.}\\
& & \tau\text{-m-irred.}\ar@{=>}[u]_{\approx}\ar@{=>}[ur]   &}$$
\end{theorem}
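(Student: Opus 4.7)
The plan is to verify each of the five implications in the diagram by direct unpacking of the definitions recalled just above the theorem, together with the basic observation that any trivial $\tau$-factorization of a non-unit $a$ has the form $a=\lambda a_1$ with $\lambda\in U(R)$, which forces $a\approx a_1$ (and hence also $a\sim a_1$). The central observation I would emphasize is that the only $\tau$-factorizations that an unrefinable atom admits are of this form, so the four irreducibility conditions collapse onto the single trivial factorization.

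First I would handle the top row. For $\tau$-very strongly $\Rightarrow$ $\tau$-unrefinably irreducible, the definition of very strongly irreducible says $a$ admits no non-trivial $\tau$-factorizations at all, which is literally the unrefinable atom condition. For $\tau$-unrefinably $\Rightarrow$ $\tau$-strongly irreducible, take any $\tau$-factorization $a=\lambda a_1\cdots a_n$; by hypothesis it is trivial, so $n=1$ and $a=\lambda a_1$ with $\lambda\in U(R)$, giving $a\approx a_1$ and in particular $a\approx a_i$ for some $i$. The implication $\tau$-strongly $\Rightarrow$ $\tau$-irreducible is immediate since $\approx$ refines $\sim$.

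Next I would handle the m-irreducible part. For $\tau$-unrefinably $\Rightarrow$ $\tau$-m-irreducible, the same trivial factorization $a=\lambda a_1$ has only the single index $i=1$, and $a\approx a_1$ certainly gives $a\sim a_1$, so the ``for all $i$'' condition holds vacuously-easily. For $\tau$-m-irreducible $\Rightarrow$ $\tau$-irreducible, observe that any $\tau$-factorization $a=\lambda a_1\cdots a_n$ of a non-unit has $n\geq 1$, so ``$a\sim a_i$ for all $i$'' trivially implies ``$a\sim a_i$ for some $i$.'' Finally, for the labeled arrow $\tau$-m-irreducible $\Rightarrow_\approx$ $\tau$-strongly irreducible, assume $R$ is a strongly associate ring; then $a\sim a_i$ for all $i$ upgrades in $R$ to $a\approx a_i$ for all $i$ (in particular for some $i$), which is strong irreducibility.

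There is not really a hard step here; the argument is essentially a bookkeeping exercise. The only subtlety worth flagging is the convention that $a$ is a non-unit, which rules out the degenerate ``factorization'' $a=\lambda$ and guarantees $n\geq 1$—without this, the implication $\tau$-m-irreducible $\Rightarrow$ $\tau$-irreducible would fail because of a vacuous universal quantifier. Since the statement is attributed to \cite[Theorem 3.9]{Mooney} and \cite{Mooneycomplete}, I would keep the write-up short and simply record these one-line verifications rather than reprove the equivalent characterizations of the various $\tau$-irreducibles from those references.
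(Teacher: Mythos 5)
Your proposal is correct: all six implications in the diagram are verified by the same definition-unpacking that the cited source \cite[Theorem 3.9]{Mooney} uses, and the present paper offers no inline proof beyond that citation. The one subtlety you flag --- that a $\tau$-factorization of a non-unit necessarily has $n\geq 1$, so the universal quantifier in $\tau$-m-irreducibility is never vacuous --- is exactly the right point to make explicit.
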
 
\indent Following A. Bouvier, a ring $R$ is said to be \emph{pr\'esimplifiable} if $x=xy$ implies $x=0$ or $y\in U(R)$ as in \cite{bouvier71, bouvier72a, bouvier72b, bouvier74}.  When $R$ is pr\'esimplifiable, the various associate relations coincide.  As seen in \cite{Mooney}, for non-zero elements, if $R$ is pr\'esimplifiable, then $\tau$-irreducible will imply $\tau$-very strongly irreducible and the various types of irreducible elements will also coincide.  Any integral domain or quasi-local ring is pr\'esimplifiable.  Examples are given in \cite{Valdezleon} and abound in the literature which show that in a general commutative ring setting, each of these types of irreducible elements are distinct.  For further discussion of the different $\tau$-irreducible elements, the reader is directed to \cite{Mooney}.
\\
\indent This leads to the following $\tau$-finite factorization properties that a commutative ring may possess given a particular choice for $\tau$, defined in \cite{Mooney, Mooneycomplete}.  Let $\alpha \in \{$atomic, strongly atomic, m-atomic, unrefinably atomic, very strongly atomic$ \}$, $\beta \in \{$associate, strong associate, very strong associate$\}$ and $\tau$ a symmetric relation on $R^{\#}$.  Then $R$ is said to be \emph{$\tau$-$\alpha$} if every non-unit $a\in R$ has a $\tau$-factorization $a=\lambda a_1\cdots a_n$ with $a_i$ being $\tau$-$\alpha$ for all $1\leq i \leq n$.  We will call such a factorization a \emph{$\tau$-$\alpha$-factorization}.  We say $R$ satisfies the \emph{$\tau$-ascending chain condition on principal ideals (ACCP)} if for every chain $(a_0) \subseteq (a_1) \subseteq \cdots \subseteq (a_i) \subseteq \cdots$ with $a_{i+1} \mid_{\tau} a_i$, there exists an $N\in \N$ such that $(a_i)=(a_N)$ for all $i>N$.
\\
\indent A ring $R$ is said to be a \emph{$\tau$-$\alpha$-$\beta$-unique factorization ring (UFR)} if (1) $R$ is $\tau$-$\alpha$ and (2) for every non-unit $a \in R$ any two $\tau$-$\alpha$ factorizations $a=\lambda_1 a_1 \cdots a_n = \lambda_2 b_1 \cdots b_m$ have $m=n$ and there is a rearrangement so that $a_i$ and $b_i$ are $\beta$.  A ring $R$ is said to be a \emph{$\tau$-$\alpha$-half factorization ring or half factorial ring (HFR)} if (1) $R$ is $\tau$-$\alpha$ and (2) for every non-unit $a \in R$ any two $\tau$-$\alpha$-factorizations have the same length.  A ring $R$ is said to be a \emph{$\tau$-bounded factorization ring (BFR)} if for every non-unit $a \in R$, there exists a natural number $N(a)$ such that for any $\tau$-factorization $a=\lambda a_1 \cdots a_n$, $n \leq N(a)$. A ring $R$ is said to be a \emph{$\tau$-$\beta$-finite factorization ring (FFR)} if for every non-unit $a \in R$ there are only a finite number of non-trivial $\tau$-factorizations up to rearrangement and $\beta$.  A ring $R$ is said to be a \emph{$\tau$-$\beta$-weak finite factorization ring (WFFR)} if for every non-unit $a \in R$, there are only finitely many $b\in R$ such that $b$ is a non-trivial $\tau$-divisor of $a$ up to $\beta$.  A ring $R$ is said to be a \emph{$\tau$-$\alpha$-$\beta$-divisor finite ring (df ring)} if for every non-unit $a \in R$, there are only finitely many $\tau$-$\alpha$ $\tau$-divisors of $a$ up to $\beta$.
%\\
%\indent We will also find occasion to be interested in the following definitions, where we consider factorizations distinct if they include different ring elements, i.e. not necessarily only up to associate of some type.  A ring $R$ is said to be a \emph{strong-$\tau$-finite factorization ring (FFR)} if for every non-unit $a \in R$ there are only a finite number of non-trivial $\tau$-factorizations up to rearrangement.  A ring $R$ is said to be a \emph{strong-$\tau$-$\beta$-weak finite factorization ring (WFFR)} if for every non-unit $a \in R$, there are only finitely many $b\in R$ such that $b$ is a non-trivial $\tau$-divisor of $a$.  A ring $R$ is said to be a \emph{strong-$\tau$-$\alpha$-divisor finite ring (df ring)} if for every non-unit $a \in R$, there are only finitely many $\tau$-$\alpha$ $\tau$-divisors of $a$.
\\
\indent These result in the following diagram accompanying \cite[Theorem 4.1]{Mooney} illustrating the relationship between the various $\tau$-finite factorization properties in rings with zero-divisors, where $\nabla$ represents $\tau$ being refinable.
$$\xymatrix{
            &        \tau\text{-}\alpha \text{-HFR} \ar@{=>}^{\nabla}[dr]     &             &                  &                 \\
\tau\text{-}\alpha\text{-} \beta \text{-UFR} \ar@{=>}[ur] \ar@{=>}^{\nabla}[r]  & \tau\text{-}\beta \text{-FFR} \ar@{=>}[r] \ar@{=>}[d]  & \tau\text{-BFR} \ar@{=>}[r]^{\nabla}& \tau\text{-ACCP} \ar@{=>}^{\nabla}[r]& \tau\text{-}\alpha\\
            & \tau\text{-}\beta\text{-WFFR} \ar@{=>}[d] \ar@{=>}[dl]_{\nabla}                    &              &  \text{ACCP} \ar@{=>}[u]               &                  \\
\tau\text{-}\alpha\  \tau\text{-}\alpha\text{-}\beta \text{-df ring} \ar@{=>}[r]           & \tau\text{-}\alpha\text{-}\beta \text{-df ring} &
            }$$
\subsection{$\tau$-U-Factorization Definitions}\ 
\\
\indent In this section we briefly present the requisite $\tau$-U-factorization definitions and results from \cite{Mooney2}.  As in \cite{Axtell2}, we define U-factorization as follows.  Let $a\in R$ be a non-unit.  If $a=\lambda a_1\cdots a_n b_1\cdots b_m$ is a factorization with $\lambda \in U(R)$, $a_i,b_i \in R^{\#}$, then we will call 
$$a=\lambda a_1 a_2 \cdots a_n \left\lceil b_1 b_2\cdots b_m \right\rceil$$
 a $\emph{U-factorization}$ of $a$ if (1) $\ a_i(b_1 \cdots b_m)=(b_1 \cdots b_m)$ for all $1\leq i \leq n$ and (2) $\ b_j(b_1 \cdots \widehat{b_j} \cdots b_m) \neq (b_1 \cdots \widehat{b_j} \cdots b_m)$ for $1 \leq j \leq m$ where $\widehat{b_j}$ means $b_j$ is omitted from the product.  Here $(b_1 \cdots b_m)$ is the principal ideal generated by $b_1 \cdots b_m$.  The $b_i$'s in this particular U-factorization above will be referred to as \emph{essential divisors}. The $a_i$'s in this particular U-factorization above will be referred to as \emph{inessential divisors}.  A U-factorization is said to be \emph{trivial} if there is only one essential divisor.
\\
\indent  A \emph{$\tau$-U-factorization} of a non-unit $a\in R$ is a U-factorization  $a=\lambda a_1 a_2 \cdots a_n \left\lceil b_1 b_2\cdots b_m \right\rceil$ for which $\lambda a_1 \cdots a_n b_1 \cdots b_m$ is also a $\tau$-factorization.  
\\
\indent Given a symmetric relation $\tau$ on $R^{\#}$, we say $R$ is \emph{$\tau$-U-refinable} if for every $\tau$-U-factorization of any non-unit $a\in U(R)$, $a=\lambda a_1 \cdots a_n \left\lceil b_1 \cdots b_m\right\rceil$, any $\tau$-U-factorization of an essential divisors, $b_i=\lambda' c_1 \cdots c_{n'} \left\lceil d_1 \cdots d_{m'}\right\rceil$ satisfies 
$$a=\lambda \lambda' a_1 \cdots a_n c_1 \cdots c_{n'}\left\lceil b_1 \cdots b_{i-1} d_1 \cdots d_{m'} b_{i+1} \cdots \ b_m\right\rceil$$ is a $\tau$-U-factorization.  
\\
%\indent As stated in \cite{Axtell}, the primary benefit of looking at U-factorizations is the elimination of troublesome idempotent elements that ruin many of the finite factorization properties.  For instance, even $\Z_6$ is not a BFR (a ring in which every non-unit has a bound on the number of non-unit factors in any factorization) because we have $3=3^2$.  Thus, $3$ is an idempotent, so $3=3^n$ for all $n\geq 1$ which yields arbitrarily long factorizations.  When we use U-factorization, we see any of these factorizations can be rearranged to $3=3^{n-1}\left\lceil 3\right\rceil$, which has only one essential divisor.   
%\\
\indent Let $\alpha \in \{$irreducible, strongly irreducible, m-irreducible, very strongly irreducible$\}$.  Let $a$ be a non-unit.  If $a=\lambda a_1 a_2 \cdots a_n \left\lceil b_1 b_2\cdots b_m \right\rceil$ is a $\tau$-U-factorization, then this factorization is said to be a \emph{$\tau$-U-$\alpha$-factorization} if it is a $\tau$-U-factorization and the essential divisors $b_i$ are $\tau$-$\alpha$ for $1 \leq i \leq m$.  
\\
\indent We now define the finite factorization properties using the $\tau$-U-factorization approach.  Let $\alpha \in \{$ irreducible, strongly irreducible, m-irreducible, unrefinably irreducible, very strongly irreducible $\}$ and let $\beta \in \{$associate, strongly associate, very strongly associate $\}$.  $R$ is said to be \emph{$\tau$-U-$\alpha$} if for all non-units $a\in R$, there is a $\tau$-U-$\alpha$-factorization of $a$.  $R$ is said to satisfy \emph{$\tau$-U-ACCP} (ascending chain condition on principal ideals) if every properly ascending chain of principal ideals $(a_1) \subsetneq(a_2) \subsetneq \cdots $ such that $a_{i+1}$ is an essential divisor in some $\tau$-U-factorization of $a_i$, for each $i$ terminates after finitely many principal ideals.  $R$ is said to be a \emph{$\tau$-U-BFR} if for all non-units $a\in R$, there is a bound on the number of essential divisors in any $\tau$-U-factorization of $a$. 
\\
\indent $R$ is said to be a \emph{$\tau$-U-$\beta$-FFR} if for all non-units $a\in R$, there are only finitely many $\tau$-U-factorizations up to rearrangement of the essential divisors and $\beta$.  $R$ is said to be a \emph{$\tau$-U-$\beta$-WFFR} if for all non-units $a\in R$, there are only finitely many essential divisors among all $\tau$-U-factorizations of $a$ up to $\beta$.  $R$ is said to be a \emph{$\tau$-U-$\alpha$-$\beta$-divisor finite (df) ring} if for all non-units $a\in R$, there are only finitely many essential $\tau$-$\alpha$ divisors up to $\beta$ in the $\tau$-U-factorizations of $a$. 
\\
\indent $R$ is said to be a \emph{$\tau$-U-$\alpha$-HFR} if $R$ is $\tau$-U-$\alpha$ and for all non-units $a\in R$, the number of essential divisors in any $\tau$-U-$\alpha$-factorization of $a$ is the same.  $R$ is said to be a \emph{$\tau$-U-$\alpha$-$\beta$-UFR} if $R$ is a $\tau$-U-$\alpha$-HFR and the essential divisors of any two $\tau$-U-$\alpha$-factorizations can be rearranged to match up to $\beta$.  
\\
\indent The following diagram summarizes the main results from from \cite[Theorem 4.3 and Theorem 4.4]{Mooney2} where $\approx$ represents $R$ being strongly associate, and $\dagger$ represents $R$ is $\tau$-U-refinable:

$$\xymatrix{
    \tau \text{-} \alpha \text{-} \beta \text{-UFR} \ar@{=>}^{\approx}[d]       &        \tau \text{-U-} \alpha\text{-HFR} \ar@{=>}^{\dagger}[dr]     &   \tau \text{-} \alpha \text{-HFR} \ar@{=>}_{\approx}[l]          &                  &                 \\
\tau \text{-U-} \alpha \text{-} \beta \text{-UFR} \ar@{=>}[ur] \ar@{=>}^{\dagger}[r]  & \tau \text{-U-} \beta \text{-FFR} \ar@{=>}[r] \ar@{=>}[d]  & \tau \text{-U-BFR} \ar@{=>}[r]^{\dagger}& \tau\text{-U-ACCP} \ar@{=>}^{\dagger}[r]& \tau\text{-U-}\alpha\\
 \tau \text{-} \beta \text{-WFFR} \ar@{=>}[r]           & \tau \text{-U-} \beta \text{-WFFR} \ar@{=>}[d]                     &   \tau \text{-} \text{BFR} \ar@{=>}[u]           &  \tau{-} \text{ACCP} \ar@{=>}[u]               &       \tau \text{-} \alpha \ar@{=>}[u]           \\
 \tau\text{-}\alpha \text{-} \beta \text{ df ring}   \ar@{=>}[r]        & \tau\text{-}\text{U-}\alpha \text{-} \beta \text{ df ring} & \tau\text{-}\beta \text{-FFR} \ar@{=>}[uul]
            }$$

\section{$\tau$-Regular Factorization} \label{sec: regular}
\indent The primary benefit of looking at the factorization of the regular elements is that for regular elements, all of the associate relations coincide.  That is, let $a,b \in \reg$, then $a\sim b$ implies $a\cong b$.  Suppose $a=rb$.  Neither $a$ nor $b$ can be zero, or else they could not be regular elements since we assume $R$ has an identity which is not zero.  But $a\sim b$ implies there is an $s\in R$ such that $b=sa$.  Thus $a=rb=r(sa)=(rs)a$, but $a$ is regular, so $a(1-rs)=0$ implies $rs-1=0$ or $rs=1$, so $r \in U(R)$ as desired.  Another important consequence is that for a regular element, we always have $a \cong a$.  This means that for a regular, non-unit element $a\in \reg$, if $a$ is irreducible, then $a$ is very strongly irreducible.  As a consequence, for a regular, non-unit $a\in R$ we can simply refer to it as \emph{irreducible} without any ambiguity.  We will soon see that this simplifies matters considerably.
\subsection{$\tau$-Regular Factorization Definitions}\ 
\\
\indent  Let $\tau$ be a symmetric relation on $R^{\#}$.  A $\tau$-factorization, $a=\lambda a_1 \cdots a_n$ with $\lambda \in U(R)$, and $a_i \tau a_j$ for all $i \neq j$ is said to be a \emph{$\tau$-regular-factorization} or \emph{$\tau$-r-factorization} if $a \in \reg$.  Note that $a$ is regular if and only if $a_i$ is regular for each $1\leq i \leq n$. 
 
\begin{proposition}\label{prop: atoms} Let $R$ be a commutative ring with $1$ and let $\tau$ be a symmetric relation on $R^{\#}$.  Given $a \in \reg$, the following are equivalent.
\\
(1) For any $\tau$-regular-factorization, $a=\lambda a_1 \cdots a_n$, we have $a\sim a_i$ for some $1 \leq i \leq n$.\\
(2) For any $\tau$-regular-factorization, $a=\lambda a_1 \cdots a_n$, we have $a\approx a_i$ for some $1 \leq i \leq n$.\\
(3) For any $\tau$-regular-factorization, $a=\lambda a_1 \cdots a_n$, we have $a\sim a_i$ for all $1 \leq i \leq n$.\\
(4) The only $\tau$-regular factorizations of $a$ are of the form $a=\lambda (\lambda^{-1}a)$.\\
(5) $a\cong a$ and for any $\tau$-regular-factorization, $a=\lambda a_1 \cdots a_n$, we have $a\cong a_i$ for some $1 \leq i \leq n$.\\
\end{proposition}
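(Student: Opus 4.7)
The argument rests on the two observations highlighted in the opening paragraph of Section \ref{sec: regular}: for regular elements the three associate relations $\sim$, $\approx$, $\cong$ coincide, and every regular element $a$ satisfies $a \cong a$. Because any $\tau$-regular-factorization $a = \lambda a_1 \cdots a_n$ has all $a_i$ regular, these facts apply to every comparison between $a$ and any $a_i$. Consequently the phrases ``$a \sim a_i$'', ``$a \approx a_i$'', and ``$a \cong a_i$'' are interchangeable throughout the five conditions, and the clause ``$a \cong a$'' in (5) is automatic.

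The plan is to establish the cycle $(4) \Rightarrow (3) \Rightarrow (1) \Rightarrow (4)$, which makes (1), (3), (4) equivalent, and then to read off $(1) \Leftrightarrow (2) \Leftrightarrow (5)$ from the coincidence of associate relations above. The implication $(4) \Rightarrow (3)$ is immediate, since the only factorization one must inspect is $a = \lambda(\lambda^{-1}a)$, and $\lambda^{-1}a \sim a$ because they generate the same principal ideal. The implication $(3) \Rightarrow (1)$ is trivial.

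The crux is $(1) \Rightarrow (4)$. Let $a = \lambda a_1 \cdots a_n$ be any $\tau$-regular-factorization and assume $a \sim a_i$ for some $i$. Then $a \mid a_i$, so $a_i = ra$ for some $r \in R$. Substituting,
\[
 a \;=\; \lambda a_1 \cdots a_{i-1}(ra)a_{i+1}\cdots a_n \;=\; a \cdot \Big(\lambda r \prod_{j \neq i} a_j\Big).
\]
Since $a$ is regular, cancellation yields $\lambda r \prod_{j \neq i} a_j = 1$, so each $a_j$ with $j \neq i$ divides $1$ and is therefore a unit. But every factor in a $\tau$-factorization is by definition a non-unit, so no such index $j$ can exist; that is, $n = 1$. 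Hence every $\tau$-regular-factorization of $a$ has the trivial form $a = \lambda(\lambda^{-1}a)$, which is condition (4).

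Finally, $(1) \Leftrightarrow (2) \Leftrightarrow (5)$ follow at once from the fact that $\sim$, $\approx$, $\cong$ agree on regular elements and that $a \cong a$ holds automatically for $a \in \reg$. The only real obstacle is the cancellation step in $(1) \Rightarrow (4)$, where regularity of $a$ is precisely what allows one to conclude that the surviving factors must be units; everything else is bookkeeping that disappears once the associate relations are identified.
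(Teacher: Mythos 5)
Your proof is correct and follows essentially the same route as the paper: both reduce everything to the observation that $\sim$, $\approx$, $\cong$ coincide on regular elements and that the crucial implication is showing a factorization with $a$ associate to some factor must be trivial. The paper runs the cycle $(5)\Rightarrow(4)\Rightarrow(3)\Rightarrow(2)\Rightarrow(1)\Rightarrow(5)$ and extracts the unit from the definition of $\cong$, while you run $(1)\Rightarrow(4)$ by cancelling the regular element $a$ directly, but these are the same argument unwound in a slightly different order.
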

\begin{proof}
(5) $\Rightarrow$ (4) Suppose $a=\lambda a_1 \cdots a_n$ is a $\tau$-regular factorization with $n\geq 2$.  Then by hypothesis $a \cong a_i$ for some $1 \leq i \leq n$.  Then 
$$a=(\lambda a_1 \cdots a_{i-1} \widehat{a_i} a_{i+1} \cdots a_n)a_i$$
implies that $(\lambda a_1 \cdots a_{i-1} \widehat{a_i} a_{i+1} \cdots a_n)$ is a unit.  Hence the factorization was a trivial factorization to begin with.
\\
\indent (4) $\Rightarrow$ (3) is immediate.  After noting that any divisor of a regular element must be regular and hence $\sim, \approx$ and $\cong$ coincide, it is clear that (3) $\Rightarrow$ (2) and $(2) \Rightarrow (1)$.
\\
\indent (1) $\Rightarrow$ (5) Since $a$ is regular by hypothesis, $a\cong a$ and again $\sim, \approx$ and $\cong$ coincide on any divisors of a regular element, completing the proof.
%To show $(2) \Leftrightarrow (5)$, by \cite[Theorem 3.9]{Mooney}, it suffices to check that (2) $\Rightarrow$ (5).  Let $a$ be $\tau$-atomic.  We first check that $a \cong a$.  Certainly $(a)=(a)$, so $a\sim a$.  Since $a\in \reg$, $a\neq 0$, so suppose $a=ra$ for some $r\in R$.  Then $a(r-1)=0$, implies $r-1=0$ or $r=1\in U(R)$ as desired.  We now suppose $a=\lambda a_1 \cdots a_n$ is a $\tau$-factorization, then since $a$ is $\tau$-atomic, $a\sim a_i$ for some $1\leq i \leq n$.  But $a$ is regular, so $a\sim a_i$ implies $a\cong a_i$, proving $a$ is $\tau$-very strongly atomic.
%\\
%The only difference between (1) and (5) is that for a $\tau$-very strongly atomic element, $a$, we have $a \cong a$ in addition to having only trivial $\tau$-factorizations.  But as in the above proof, for $a \in \reg$, we always have $a\cong a$, so (1) $\Leftrightarrow (5)$.  This completes the proof.
\end{proof}
\indent We say that a non-unit, $a\in \reg$ is \emph{$\tau$-$r$-irreducible} or \emph{a $\tau$-$r$-atom} if $a$ satisfies any of the above equivalent conditions.  We say \emph{$R$ is $\tau$-$r$-atomic} if for all $a\in $Reg$(R)^{\#}$, there is a $\tau$-r-factorization into $\tau$-r-irreducible elements.  $R$ satisfies \emph{$\tau$-$r$-ACCP} if for every chain of principal ideals generated by regular elements $(a_1)\subsetneq (a_2) \subsetneq \cdots (a_i) \subsetneq \cdots$ with $a_{i+1}$ occurring as a $\tau$-divisor in some $\tau$-r-factorization of $a_i$ for all $i$ becomes stationary.
%We note here since all zero-divisors are automatically $\tau$-r-atoms since there can be no non-trivial regular factorizations, there is no harm in looking only at the regular elements.  
\\
\indent $R$ is a \emph{$\tau$-$r$-half factorization ring (HFR)} if (1) $R$ is $\tau$-$r$-atomic and (2) if $\lambda a_1 \cdots a_m=\mu b_1\cdots b_n$ are two $\tau$-$r$-atomic $\tau$-factorizations implies that $m=n$.  $R$ is said to be a \emph{$\tau$-r-unique factorization ring (UFR)} if $R$ is a $\tau$-$r$-HFR and there is a rearrangement of any two $\tau$-r-atomic factorizations as above such that $a_i\sim b_i$ for all $1 \leq i \leq n=m$.  We define the \emph{$\tau$-regular-elasticity} as $\tau$-$r$-$\rho(R)=$ sup$\{\rho(a)\mid a\in $Reg$(R)^{\#}\}$ where $\rho(a)=$ sup$\{\frac{m}{n} \mid \lambda a=a_1 \cdots a_m=\mu b_1 \cdots b_n$ are $\tau$-atomic-factorizations $\}$.  Then it is clear that $R$ is a $\tau$-$r$-HFR if and only if $R$ is $\tau$-atomic and $\tau$-$r$-$\rho(R)$=1.  
\\
\indent $R$ is said to be a \emph{$\tau$-$r$-bounded factorization domain (BFR)} if for every $a\in$Reg$(R)$ there exists a natural number $N_r(a)$ such that for all $\tau$-r-factorizations $a=\lambda a_1 \cdots a_n$, we have $n\leq N_r(a)$.  $R$ is said to be a \emph{$\tau$-$r$-irreducible-divisor-finite ring (idf ring)} if each $a\in $Reg$(R)^{\#}$ has at most a finite number of non-associate $\tau$-irreducible $\tau$-divisors.  $R$ is said to be a  \emph{$\tau$-$r$-finite factorization ring (FFR)} if for every $a\in$Reg$(R)^{\#}$, $a$ has only a finite number (up to order and associates) of $\tau$-factorizations.  $R$ is said to be a \emph{$\tau$-$r$-weak finite factorization ring (WFFR)} if for every $a\in$\text{Reg}$(R)^{\#}$ there are only a finite number of non-associate $\tau$-divisors.

\subsection{$\tau$-Regular Factorization Results} \label{sec: tau-regular-results}
\begin{proposition}\label{thm:reg-ffr} Let $R$ be a commutative ring with $1$.  Let $\tau$ be a symmetric relation on $R^{\#}$ with $\tau$ refinable, then the following are equivalent.
\\(1) $R$ is a $\tau$-$r$-FFR.
\\(2) $R$ is a $\tau$-$r$-WFFR.
\\(3) $R$ is a $\tau$-$r$-atomic $\tau$-$r$-idf ring.
\\(4) $R$ is $\tau$-$r$-atomic and each $a\in$Reg$(R)^{\#}$, $a$ has only finitely many $\tau$-$r$-atomic $\tau$-factorizations up to order and associates.
\\(5) For all $a\in$Reg$(R)^{\#}$, there are only finitely many $b\in$Reg$(R)^{\#}$ up to associate such that $b$ occurs as a $\tau$-factor in a $\tau$-r-factorization of $a$.
\\(6) For all $a\in$Reg$(R)^{\#}$, $(a)$ is contained in only finitely many principal ideals $(b)$ where $b\in\text{Reg}(R)^{\#}$ such that $b$ occurs as a $\tau$-factor in a $\tau$-r-factorization of $a$.
\\(7) For all $a\in$Reg$(R)^{\#}$, there are only finitely many $b\in$Reg$(R)^{\#}$ up to associate such that $b\mid_\tau a$.
\\(8) For all $a\in$Reg$(R)^{\#}$, $(a)$ is contained in only finitely many principal ideals $(b)$ where $b\in\text{Reg}(R)^{\#}$ such that $b\mid_\tau a$.
%\\(5) $R$ is $\tau$-$r$-atomic and for all $a\in$Reg$(R)^{\#}$, there are only finitely many $b\in$Reg$(R)^{\#}$, $b$ $\tau$-atom such that $b\mid_{\tau}a$ up to associates.
\end{proposition}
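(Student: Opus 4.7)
The plan is to split the eight conditions into three blocks and argue cyclically. The first block, consisting of (2), (5), (6), (7), (8), is ``the same thing in different words'' and collapses using only regularity. The second block is the trivial direction (1)$\Rightarrow$(2), (3), (4). The third block, the substantive part, is (2)$\Rightarrow$(1), (3), (4), which uses the refinability of $\tau$.

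For the first block, restriction to regular elements makes $\sim$, $\approx$, and $\cong$ coincide with equality of principal ideals, as noted at the start of Section \ref{sec: regular}; hence (5)$\Leftrightarrow$(6) and (7)$\Leftrightarrow$(8) are purely notational. Since any divisor of a regular element is itself regular, every $\tau$-factorization of a regular $a$ is automatically a $\tau$-r-factorization, and so $b\mid_\tau a$ is the same as ``$b$ is a $\tau$-factor in some $\tau$-r-factorization of $a$,'' giving (5)$\Leftrightarrow$(7); and (2) is literally (7) by the definition of $\tau$-$r$-WFFR. Trivially (1)$\Rightarrow$(2); moreover, a failure of $\tau$-$r$-atomicity would produce infinitely many distinct $\tau$-factorizations through iterated refinement, so (1) forces $\tau$-$r$-atomicity, whence also (1)$\Rightarrow$(4) and (1)$\Rightarrow$(3).

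For (2)$\Rightarrow$(3), (4), (1), I would first extract $\tau$-$r$-ACCP from (2): given a chain $(a_0)\subsetneq(a_1)\subsetneq\cdots$ with $a_{k+1}$ a $\tau$-divisor of $a_k$, iterated refinement (legitimate by refinability of $\tau$) of the $\tau$-r-factorization of $a_0$ through $a_1,a_2,\ldots$ exhibits every $a_k$ as a $\tau$-divisor of $a_0$, so (2) forces the chain to terminate. The standard argument lifts $\tau$-$r$-ACCP to $\tau$-$r$-atomicity, and combining with (7) (since $\tau$-$r$-atoms are special $\tau$-divisors) yields (3). For (3)$\Rightarrow$(4), bound the multiplicity of each $\tau$-$r$-atom $p$ dividing $a$ by applying ACCP to the ascending chain $(a)\subsetneq(a/p)\subsetneq(a/p^2)\subsetneq\cdots$, realized as a $\tau$-$r$-divisor chain by refining the given atomic factorization of $a$ at each stage; together with idf this gives only finitely many $\tau$-$r$-atomic factorizations of $a$, each of bounded length. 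Finally, for (4)$\Rightarrow$(1), any $\tau$-r-factorization of $a$ refines by refinability to one of the finitely many atomic factorizations, and each of these (of bounded length) admits only finitely many ``ungroupings'' into coarser $\tau$-factorizations, so $a$ has only finitely many $\tau$-r-factorizations.

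The main obstacle is the multiplicity bound in (3)$\Rightarrow$(4): verifying that the chain $(a)\subsetneq(a/p^k)\subsetneq\cdots$ is really a chain of $\tau$-$r$-divisors takes some delicacy, since without the combinability of $\tau$ one cannot simply collapse factors of the atomic factorization of $a$ into a single $\tau$-factor. The argument has to proceed by refining the atomic factorization at each step and using the existing $\tau$-relations among atoms (and the symmetry of $\tau$) to produce the required $\tau$-factorizations witnessing each link of the chain.
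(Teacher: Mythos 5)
Your handling of (1), (2), and (5)--(8) matches the paper's, and your derivation of $\tau$-$r$-ACCP (hence atomicity, hence (3)) from (2) via the chain-refinement argument is exactly the paper's (2)$\Rightarrow$(3). The genuine problem is the step out of (3): in your cycle the only arrow leaving (3) is (3)$\Rightarrow$(4), and that argument does not work as stated. First, it invokes $\tau$-$r$-ACCP, but you extracted ACCP from hypothesis (2), not from (3); in a cyclic proof the step (3)$\Rightarrow$(4) may use only (3), and ``$\tau$-$r$-atomic $\tau$-$r$-idf implies $\tau$-$r$-ACCP'' is not available --- establishing it essentially amounts to first proving (3)$\Rightarrow$(1), the very implication at stake. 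Second, even granting ACCP, the chain $(a)\subsetneq(a/p)\subsetneq(a/p^2)\subsetneq\cdots$ only violates $\tau$-$r$-ACCP if each $a/p^{k+1}$ actually occurs as a $\tau$-factor in some $\tau$-$r$-factorization of $a/p^{k}$, i.e.\ if the remaining atoms of the refined factorization can be collapsed into a single factor that is $\tau$-related to $p$. That requires combinability (or multiplicativity) of $\tau$, which is not assumed; refinability and symmetry go in the wrong direction, and your closing suggestion that the ``existing $\tau$-relations among atoms'' will produce these factorizations is precisely the point that cannot be carried out. So, as written, (3) is shown to follow from the other conditions but not to imply any of them.

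The missing ingredient is the paper's proof of (3)$\Rightarrow$(1), which avoids both ACCP and combinability: for $x\in\text{Reg}(R)^{\#}$, list the finitely many non-associate irreducible $\tau$-factors $x_1,\ldots,x_n$ of $x$, write the factorizations in the form $x=\lambda x_1^{s_1}\cdots x_n^{s_n}$, and suppose the exponent vectors are unbounded; after passing to subsequences coordinate by coordinate, two such factorizations with componentwise comparable exponents can be cancelled against each other (legitimate because the $x_i$ are regular), forcing some $x_i$ to be a unit --- a contradiction. Cancellability of regular elements does all the work that your multiplicity-chain was meant to do. Your (4)$\Rightarrow$(1) via refining an arbitrary $\tau$-$r$-factorization to an atomic one and counting regroupings is sound and would give an alternative closure of the cycle, provided you reroute the hard step to (3)$\Rightarrow$(1) (or prove (3)$\Rightarrow$(4) by the same exponent-bounding device) rather than through the ACCP chain.
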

\begin{proof}  (1) $\Rightarrow$ (2) Let $R$ be a $\tau$-$r$-FFR and $a\in$\text{Reg}$(R)^{\#}$, then there are only a finite number of $\tau$-factorizations (up to order and associate), each of these is of finite length.  Hence, since every $\tau$-divisor of $a$ must be among these up to associate, $R$ is a $\tau$-$r$-WFFR.
\\
\indent (2) $\Rightarrow$ (3) Let $R$ be a $\tau$-$r$-WFFR and $a\in$\text{Reg}$(R)^{\#}$.  If $a$ has a finite number of $\tau$-divisors, then certainly it has a finite number of irreducible $\tau$-divisors, so it suffices to show $a$ has a $\tau$-$r$-atomic factorization.  We instead show the stronger condition, that $R$ satisfies $\tau$-$r$-ACCP, that is any chain of principal ideals generated by regular elements $(a_0) \subsetneq (a_1) \subsetneq \cdots \subset (a_i) \subsetneq \cdots$ with $a_{i+1}$ occurring as a $\tau$-factor in a $\tau$-r-factorization of $a_i$ and $a_i\in$\text{Reg}$(R)^{\#}$ for all $i$ comes to a halt.  Suppose there is an infinite chain, but then each $a_i$ is a $\tau$-divisor of $a_0$ and none of them are associate since each containment is proper, so we would have an infinite number of non-associate $\tau$-$r$-divisors contradicting the fact that $R$ is a $\tau$-$r$-WFFR (note: we use strongly here that $\tau$ is refinable to ensure that at each step we retain a $\tau$-factorization).
\\
\indent (3) $\Rightarrow$ (1) This proof is similar to \cite[Thm 5.1]{anderson90}. Let $R$ be a $\tau$-$r$-atomic $\tau$-$r$-idf ring and $x\in\text{Reg}(R)^{\#}$.  Let $x_1, \cdots, x_n$ be the $\tau$-$r$-irreducible $\tau$-factors of $x$, in particular they are all regular elements of $R$.  Suppose that in a $\tau$-factorization of $x$, $x=\lambda x_1^{s_1}\cdots x_n^{s_n}$, we always have $0 \leq s_i \leq N_i$ for each $1 \leq i \leq n$.  Then there is a bound on the number of non-associate factors of $x$.  So we suppose that this is not the case.  There must then be some $s_i$ which is not bounded, we assume it is the first one $s_1$.  Hence for each $k \geq 1$, we can write $x=\lambda_k x_1^{s_{k_1}} \cdots x_n^{s_{k_n}}$, where $\lambda_k\in U(R)$ and $s_{1_1} < s_{2_1} < s_{3_1} <\cdots$.  Suppose that in this set of factorizations $\{s_{k_i}\}$ is bounded for each $i$ with $1<i\leq n$.  Then since there are only finitely many choices for $s_{k_2}, \cdots, s_{k_n}$ we must have $s_{k_2}=s_{j_2}, \cdots, s_{k_n}=s_{j_n}$ for some $j>k$.  But then $\lambda_j x_1^{s_{j_1}} \cdots x_n^{s_{j_n}}=x=\lambda_k x_1^{s_{k_1}} \cdots x_n^{s_{k_n}}$, but since each $x_i$ is regular, we can cancel to get $\lambda_j x_1^{s_{j_1}}=\lambda_k x_1^{s_{k_1}}$, where $s_{j_1}>s_{k_1}$, but then $x_1$ would be a unit, a contradiction.  
\\
\indent Thus we must have some set $\{s_{k_i}\}$ for a fixed $i$ with $1<i\leq n$ is unbounded, say for $i=2$.  By taking subsequences at each stage, we may assume that $s_{1_1} < s_{2_1} < s_{3_1} < \cdots$ and $s_{1_2} < s_{2_2} < s_{3_2} < \cdots$.  Continuing in this manner, we may assume for each $1 \leq i \leq n$ that $s_{1_i} < s_{2_i} < s_{3_i} < \cdots $.  But then we would have $\lambda_1 x_1^{s_{1_1}} \cdots x_n^{s_{1_n}}=x=\lambda_2 x_1^{s_{2_1}} \cdots x_n^{s_{2_n}}$ where $s_{1_i}<s_{2_i}$, a contradiction as again, we would have $x_i$ must be units after cancellation, which is impossible.
\\
\indent (1) $\Rightarrow$ (4) This is clear as we have already seen that a $\tau$-$r$-FFR is $\tau$-$r$-atomic and a $\tau$-$r$-atomic factorization is certainly a $\tau$-$r$-factorization, so there must be a finite number of $\tau$-$r$-atomic factorizations up to order and associate for every $a\in\text{Reg}(R)^{\#}$. 
\\
\indent (4) $\Rightarrow$ (3) Let $a\in\text{Reg}(R)^{\#}$, then there are a finite number of $\tau$-$r$-atomic factorizations, each has a finite number of $\tau$-$r$-atomic factors, so the collection of $\tau$-$r$-atomic divisors is finite, so $R$ is a $\tau$-$r$-atomic $\tau$-$r$-idf ring.
\\
\indent (5), (6) are restatements of (2) and their equivalence is immediate. Furthermore, (5) and (7) (resp. (6) and (8)) are seen to be equivalent after noting that for $b\in \reg$, $a\mid_\tau b$ implies there is some $\tau$-factorization $b=\lambda a a_1 \cdots a_n$, but since $b$ is regular and the set of regular elements is saturated, every $\tau$-factor must be regular so this is really a $\tau$-factorization.
\end{proof}
\begin{theorem} \label{thm: reg} Let $R$ be a commutative ring with $1$, with $\tau$ a symmetric relation on $R^{\#}$.  We have the following.
\\(1) $R$ is a $\tau$-$r$-UFR implies $R$ is a $\tau$-$r$-HFR.
\\(2) For $\tau$ refinable, $R$ is a $\tau$-$r$-HFR implies $R$ is a $\tau$-$r$-BFR.
\\(3) For $\tau$ refinable, $R$ is a $\tau$\text{-}$r$\text{-UFR} implies $R$ is a $\tau$-$r$-FFR.
\\(4) $R$ is a $\tau$-$r$-FFR implies $R$ is a $\tau$-$r$-BFR.
\\(5) For $\tau$ refinable, $R$ is a $\tau$-$r$-BFR implies $R$ satisfies $\tau$-$r$-ACCP.
\\(6) For $\tau$ refinable, $R$ satisfies $\tau$-$r$-ACCP implies $R$ is $\tau$-$r$-atomic.
%\\(7) $R$ satisfies $\tau$-ACCP implies $R$ satisfies $\tau$-$r$-ACCP.
\end{theorem}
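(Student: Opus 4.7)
The plan is to handle parts (1)--(4) directly from the definitions and from Proposition \ref{thm:reg-ffr}, and to handle the two genuinely combinatorial parts (5) and (6) by the standard refinement and Zorn arguments familiar from the classical domain literature. For (1), a $\tau$-$r$-UFR is by definition a $\tau$-$r$-HFR with the added condition that atomic factors can be paired up to associate, so the implication is immediate. For (2), given $a\in\text{Reg}(R)^{\#}$ and any $\tau$-$r$-factorization $a = \lambda a_1\cdots a_n$, each $a_i$ is a regular non-unit (since the regulars are saturated), so $\tau$-$r$-atomicity supplies a $\tau$-$r$-atomic factorization of each $a_i$ and refinability of $\tau$ splices these into a $\tau$-$r$-atomic factorization of $a$ of length at least $n$; the HFR property forces $n$ to be bounded by the common atomic length of $a$, which serves as $N_r(a)$. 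For (3), the uniqueness clause of the UFR hypothesis means every $a\in\text{Reg}(R)^{\#}$ has (up to order and associates) exactly one $\tau$-$r$-atomic factorization, which is condition (4) of Proposition \ref{thm:reg-ffr} and so yields FFR. For (4), finitely many $\tau$-$r$-factorizations of finite length trivially admit a common length bound.

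For (5), I would argue contrapositively. Suppose $R$ is not $\tau$-$r$-ACCP, and fix a strictly ascending chain $(a_0)\subsetneq (a_1)\subsetneq\cdots$ of principal ideals generated by regular elements, where $a_{i+1}$ occurs as a $\tau$-factor in a $\tau$-$r$-factorization $a_i = \lambda_i a_{i+1} c_{i,1}\cdots c_{i,k_i}$. The proper containment $(a_i)\subsetneq (a_{i+1})$ forces $k_i\geq 1$ (otherwise $a_i\sim a_{i+1}$), and all factors in a $\tau$-factorization are non-units by definition. Starting from $a_0$ and iteratively substituting the factorization of $a_{i+1}$ into the current factorization of $a_0$, refinability of $\tau$ guarantees the substituted expression remains a $\tau$-$r$-factorization of $a_0$, and each substitution contributes at least one additional non-unit factor; hence $a_0$ admits $\tau$-$r$-factorizations of unbounded length, contradicting the BFR bound $N_r(a_0)$.

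For (6), let $\Sigma$ denote the set of regular non-units of $R$ admitting no $\tau$-$r$-atomic factorization, and assume for contradiction $\Sigma\neq\emptyset$. Partially order $\Sigma$ by $a\leq b \Leftrightarrow (a)\subseteq (b)$. Under $\tau$-$r$-ACCP every chain in $\Sigma$ stabilizes and so has an upper bound in $\Sigma$, and Zorn's lemma then yields a maximal $a\in\Sigma$. Since $a\in\Sigma$, $a$ cannot be $\tau$-$r$-irreducible (otherwise $a = 1\cdot a$ would be a trivial $\tau$-$r$-atomic factorization), so failure of condition (1) in Proposition \ref{prop: atoms} produces a $\tau$-$r$-factorization $a = \lambda a_1\cdots a_n$ with $a\not\sim a_i$ for all $i$, whence $(a)\subsetneq (a_i)$ for every $i$. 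By maximality no $a_i$ lies in $\Sigma$, so each $a_i$ has a $\tau$-$r$-atomic factorization, and refinability of $\tau$ splices these into a $\tau$-$r$-atomic factorization of $a$, contradicting $a\in\Sigma$. I expect the main obstacle to be in (6), where one must verify both that $\tau$-$r$-ACCP really supplies Zorn's hypothesis and that the negation of $\tau$-$r$-atomicity of $a$ produces a factorization in which \emph{every} factor is strictly above $a$ in divisibility (not merely some factor); the first is immediate from the stabilization clause of ACCP, while the second is exactly what the equivalence of conditions (1) and (3) in Proposition \ref{prop: atoms} provides, but both points deserve care.
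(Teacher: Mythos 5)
Parts (1)--(5) of your proposal are correct and essentially follow the paper's own route: (1) is definitional, (2) is the paper's refine-to-atomic-and-compare-lengths argument, (3) routes through Proposition \ref{thm:reg-ffr} (you use its condition (4) where the paper uses condition (3), which is an equally valid entry point since the proposition's hypotheses include $\tau$ refinable), (4) is if anything cleaner than the paper's version, and (5) is the same iterated-substitution contradiction the paper gives.

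There is, however, a genuine gap in your proof of (6), in the Zorn's lemma step. You claim that ``under $\tau$-$r$-ACCP every chain in $\Sigma$ stabilizes and so has an upper bound in $\Sigma$.'' But $\tau$-$r$-ACCP is not the full ascending chain condition on principal ideals generated by regular elements: by definition it only controls chains $(a_1)\subsetneq(a_2)\subsetneq\cdots$ in which each $a_{i+1}$ actually occurs as a $\tau$-factor in some $\tau$-$r$-factorization of $a_i$. A chain in your poset $\Sigma$ (ordered merely by containment of principal ideals) carries no such divisibility data, so $\tau$-$r$-ACCP gives you no reason for it to stabilize, and the existence of a maximal element of $\Sigma$ is unjustified. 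The remedy is to abandon the maximal-counterexample framing and argue as the paper does: take $a\in\Sigma$, use the negation of condition (1) of Proposition \ref{prop: atoms} to get a $\tau$-$r$-factorization $a=\lambda a_1\cdots a_n$ with $(a)\subsetneq(a_i)$ for all $i$, observe that at least one $a_i$ must again lie in $\Sigma$ (otherwise refinability splices atomic factorizations of the $a_i$ into one for $a$), and iterate. This produces a strictly ascending chain in which each successive term \emph{is} a $\tau$-factor of its predecessor, which is exactly the kind of chain $\tau$-$r$-ACCP forbids. The ingredients for this repair are already present in the second half of your paragraph; only the source of the contradiction needs to be relocated from Zorn's lemma to the explicitly constructed $\tau$-divisibility chain.
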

\begin{proof}
(1) This is immediate from the definition.
\\
\indent (2) Let $R$ be a $\tau$-$r$-HFR.  Suppose $a=\lambda a_1 \cdots a_n$ is a $\tau$-r-atomic factorization.  We claim $N_r(a)=n$.  Let $a=\mu b_1 \cdots b_m$ be a $\tau$-r-factorization of $a$.  Since $R$ is $\tau$-r-atomic, we can find $\tau$-r-atomic factorizations for $b_i$ for each $1\leq i \leq m$.  We have assumed $\tau$ to be refinable, so we can replace each $b_i$ with the corresponding $\tau$-r-atomic factorization and collect the units in the front of the factorization and retain a $\tau$-r-factorization which is $\tau$-atomic and thus must have length $n$.  The refinement process can only increase the length of the factorization, so the length of the original factorization is no longer than $n$, proving the claim.
\\
\indent (3) We show for $\tau$-refinable, $R$ a $\tau$-$r$-UFR, $R$ is a $\tau$-$r$-atomic $\tau$-$r$-idf-ring which has been shown in Theorem \ref{thm:reg-ffr} to be equivalent to being a $\tau$-$r$-FFR.  $R$ being $\tau$-$r$-factorial gives us $\tau$-$r$-atomic for free.  Furthermore, any $\tau$-atomic factorization of $a\in$Reg$(R)^{\#}$ has the same length, say $n$ and can be reordered so that the associates match up.  This tells us there are precisely $n$ $\tau$-irreducible divisors of $a$ up to associate, hence $R$ is a $\tau$-$r$-idf-ring.  
\\
\indent (4) Suppose $R$ is a $\tau$-$r$-FFR, by definition, we know $R$ is $\tau$-$r$-atomic.  Now, let $a\in$Reg$(R)^{\#}$, let $S$ be the finite set of all $\tau$-atomic factors of $a$.  Set $N(a)=|S|$.  Let $a=\lambda a_1 \cdots a_n$ be a $\tau$-atomic factorization of $a$, then $a_i\in S$ for all $i$, but then $\{a_i\}_{i=1}^{n} \subseteq S$ and hence is finite and $n\leq N(a)=|S|$ as desired, so $R$ is a $\tau$-$r$-BFR.
\\
\indent (5) Let $R$ be a $\tau$-$r$-BFR, and we suppose for a moment that $R$ does not satisfy $\tau$-$r$-ACCP.  There must exist and infinite sequence $\{a_i\}_{i=1}^{\infty}\subseteq$Reg$(R)^{\#}$ such that $a_{n+1}\mid_{\tau} a_n$, but $a_{n+1}\not \sim a_n$ for all $n \geq 1$.  Let $a_n=\lambda_{n+1}r_{n+11}\cdots r_{n+1s_{n+1}}a_{n+1}$ be a $\tau$ factorization of $a_n$ for all $n \geq 1$.  But then we have
$$a_1=\lambda_2r_{2_1}\cdots r_{2_{s_2}}a_2=\lambda_2 r_{2_1}\cdots r_{2_{s_2}}\lambda_3 r_{3_1}\cdots r_{3_{s_3}}a_3=\cdots$$
is a $\tau$ factorization (note we use $\tau$ refinable here).  Furthermore, each of these factorizations can be refined into $\tau$-atomic elements, and it will still be a $\tau$-factorization the length of which $L_{\tau}(a_1)\geq s_2 +s_3 + \cdots s_n + 1 \geq n$ which shows we can find arbitrarily large $\tau$-atomic factorizations of $a_1$ which contradicts the fact the $R$ is a $\tau$-$r$-BFR.
\\
\indent (6) Let $R$ satisfy $\tau$-$r$-ACCP, but suppose that $R$ is not $\tau$-$r$-atomic.  Then there exists $a\in$Reg$(R)^{\#}$ with no $\tau$-factorization into $\tau$-atoms.  $a$ itself cannot be a $\tau$-atom, so say $a=\lambda a_1 \cdots a_n$ is a $\tau$-factorization with $n >1$.  Now again some $a_i$ must not be a product of $\tau$-atoms, or with $\tau$ refinable, we could find a $\tau$-atomic factorization, say it is $a_1$.  Then $a_1 \mid_{\tau} a$ and $a_1\not \sim a$ put $b_1=a_1$.  Then $a_1$ must have a $\tau$-factorization $a_1=\lambda_2 a_{2_1}\cdots a_{2_{n_2}}$ where $n_2 >1$.  Again, one of the $\tau$-factors, say $a_{2_1}$ cannot be a $\tau$-product of $\tau$-atoms.  Here $a_{2_1} \mid_{\tau}a_1=b_1$ and $a_{21}\not \sim a_1$.  Put $b_2=a_{2_1}$.  Continuing in this fashion, we obtain a sequence $\{b_i\}_{i=1}^{\infty}$ of elements of Reg$(R)^{\#}$ such that $b_{n+1} \mid_{\tau}b_n$ but $b_{n+1} \not \sim b_n$ for every $n \geq 1$.  This contradicts $R$ satisfying $\tau$-$r$-ACCP.
%\\
%\indent (7) If there are no infinite chains of principal ideals such that $a_{i+1} \mid_{\tau} a_i$ in $R$, then there are certainly no infinite chains of \emph{regular} elements such that $a_{i+1} \mid_{\tau} a_i$.
\end{proof}
The following diagram summarizes our result where $\nabla$ represents $\tau$ being refinable.
$$\xymatrix{
            &        \tau\text{-r}\text{-HFR} \ar@{=>}^{\nabla}[dr]     &             &         \tau \text{-ACCP}\ar@{=>}[d]        &                 \\
\tau\text{-r}\text{-UFR} \ar@{=>}[ur] \ar@{=>}^{\nabla}[r]& \tau\text{-r}\text{-FFR} \ar@{=>}[r] & \tau\text{-r}\text{-BFR} \ar@{=>}[r]^{\nabla}& \tau\text{-}r\text{-ACCP} \ar@{=>}^{\nabla}[r]& \tau\text{-r}\text{-atomic}\\
            &        \tau\text{-r}\text{-WFFR} \ar@{<=>}_{\nabla}[u]  \ar@{=>}[dr]   &             &            &                 \\
            &        \tau \text{-atomic  }\tau\text{-r}\text{-idf} \ar@{<=>}_{\nabla}[u]  \ar@{=>}[r]    & \tau\text{-r}\text{-idf}          &            &                }
$$
\section{$\treg$-Factorizations} \label{sec: tau-regular}
\indent In this section, we study another approach which could have be used to extend $\tau$-factorization to commutative rings with zero-divisors using regular factorizations.  In Section \ref{sec: regular}, we decided to only consider factorizations of the regular elements.  In other words, we chose to restrict the elements we attempt to factor to the regular elements of a commutative ring $R$.  We could have instead chosen to restrict the relation $\tau$ itself.  This gives us the benefit of not completely ignoring a possible large number of zero-divisors in the ring $R$, but at the cost of choosing a less natural relation $\tau$.  Moreover, it allows us to use much of the work done previously in \cite{Mooney} by just picking a different $\tau$ and keeping all of the original definitions the same.  It turns out that in many ways, either choice is fine and we end up at the same place anyway.  Studying this will be the motivation of this section.
\\
\indent Let $R$ be a commutative ring with $1$ and $\tau$ a symmetric relation on $R^{\#}$.  Then we define a new relation
$$\treg:=\tau \cap \left(\reg \times \reg \right).$$
\indent We may now pursue the $\tau$-factorizations using the approach from \cite{Mooney} and look at factoring all the non-units in $R$ instead of just the regular elements.  There is certainly a very close relationship between $\treg$-factorizations and $\tau$-regular factorizations; however, there are a few subtle differences that cause some problems, especially with the definition of $\treg$-very strongly atomic elements.  In, \cite{Mooney}, the author insisted that part of $a$ being $\tau$-very strongly atomic was that $a\cong a$.  
\\
\indent The fact that the very strongly associate relation need not be reflexive is the main reason there is not a perfect correspondence between the two approaches.  We will see that $\treg$-factorizations are simply very poorly behaved when it comes to $\treg$-very strong atoms and rearrangement up to very strong associates.  On the bright side, the $\tau$-unrefinably irreducible element introduced in \cite{Mooneycomplete} will also behave quite nicely here.
\\
\indent Of course any non-trivial idempotent element, $e$, is a zero-divisor since $e(e-1)=0$.  Furthermore, since $e=e^2=e\cdot e$, with $e$ not a unit, we see that $e\not \cong e$.  This means that $e$ is not very strongly atomic for any non-trivial idempotent element.  On the other hand, since every non-trivial $\treg$-factorization consists of a product of regular elements, we can have no non-trivial $\treg$-factorizations of $e$.  This means the only $\treg$-factorizations of any zero-divisor, in particular $e$, are the trivial factorizations.  Unfortunately, in the case of a non-trivial idempotent, $e$, this means $e$ is not a $\tau$-very strong atom, and will never have a $\treg$-very strongly atomic factorization.  We demonstrate this in the following example.
\\
\begin{example} Let $K$ be an infinite field.  $R=K \times K$ with $\tau=R^{\#} \times R^{\#}$. \end{example}
We consider the element $(1,0)\in Z(R)$.  This ring has only elements which are strongly associate to idempotent elements and units.  So the set of non-unit regular elements is empty and our ring is vacuously a $\tau$-r-UFR.  On the other hand, we have $(1,0)=(\mu^{-1}, 1)(\mu,0)$, for any unit $\mu \in K^*$,  is the only type of $\treg$-factorization of $(1,0)$, yet none of these are $\treg$-very strongly atomic factorizations.  The problem is that $(\mu, 0) \not \cong (\mu,0)$ since we have $(\mu,0)=(1,0)(\mu, 0)$ and $(1,0)$ is not a unit.  This shows we can have a $\tau$-r-UFR which is not even $\treg$-atomic.  Moreover, each of these factorizations is non-very strongly associate.  Let $\mu, \lambda \in K^*$.  Then $(1,0)=(\mu^{-1}, 1)(\mu,0)=(\lambda^{-1}, 1)(\lambda,0)$ are two $\treg$-factorizations of $(1,0)$, but $(\mu,0)=(\mu \lambda^{-1},0)(\lambda,0)$ with $(\mu \lambda^{-1},0)$ not a unit shows $(\mu,0) \not \cong (\lambda, 0)$.  Since $K$ is infinite, there are infinitely many $\treg$-factorizations of $(1,0)$, none of which are very strongly associate.
This leads us to the following results.
\begin{lemma}\label{lem: treg-nontrivial} Let $R$ be a commutative ring with $1$ and let $\tau$ be a symmetric relation on $R^{\#}$.  Let $\treg:=\tau \cap \left(\reg \times \reg \right)$.  The collection of non-trivial $\tau$-regular-factorizations and non-trivial $\treg$-factorizations coincide.
\end{lemma}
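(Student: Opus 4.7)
The plan is to prove the two inclusions separately, using two elementary facts about the set $\reg$ of regular elements: it is saturated (every divisor of a regular element is regular) and multiplicatively closed (a product of regular elements is regular, and multiplying by a unit preserves regularity).

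For the forward direction, suppose $a = \lambda a_1 \cdots a_n$ is a non-trivial $\tau$-regular-factorization, so $n \geq 2$, $a \in \reg$, and $a_i \tau a_j$ for all $i \neq j$. Because each $a_i$ divides the regular element $a$ and $\reg$ is saturated, each $a_i$ is regular. Combined with $a_i \tau a_j$, this gives $a_i \treg a_j$ for $i \neq j$, so the same expression is a non-trivial $\treg$-factorization.

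For the reverse direction, suppose $a = \lambda a_1 \cdots a_n$ is a non-trivial $\treg$-factorization. Then $n \geq 2$ and $a_i \treg a_j$ for all $i \neq j$, which forces each $a_i$ to lie in $\reg$ (since $\treg \subseteq \reg \times \reg$) and forces $a_i \tau a_j$. Since $\reg$ is multiplicatively closed and contains all units, the product $\lambda a_1 \cdots a_n = a$ is regular, so $a \in \reg$, and the factorization is a non-trivial $\tau$-regular-factorization.

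There is no real obstacle here; the only subtlety is remembering that the non-triviality hypothesis is what allows us to read off regularity of each $a_i$ from the $\treg$-factorization side (a trivial factorization $a = \lambda(\lambda^{-1}a)$ carries no pairwise $\tau$-relations to examine), and that conversely the saturation of $\reg$ is what lets us transfer regularity of $a$ to each $a_i$ on the $\tau$-r-factorization side. Once both facts about $\reg$ are in hand, the two sets of factorizations are literally described by the same data.
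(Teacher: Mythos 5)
Your proof is correct and follows essentially the same route as the paper's: saturation of $\reg$ handles the forward inclusion, and multiplicative closure of $\reg$ (together with $n \geq 2$) handles the reverse. No differences worth noting.
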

\begin{proof} Let $a=\lambda a_1 \cdots a_n$ be a non-trivial $\tau$-regular factorization.  Then $a \in \reg$ by definition of $\tau$-regular factorization, and $a_i \tau a_j$ for all $i \neq j$.  Since $a$ is regular, and the set of regular elements is saturated, we have $a_i \mid a \in \reg$ for each $1 \leq i \leq n$, we know that $a_i \in \reg$ for each $1 \leq i \leq n$.  This means $a_i \treg a_j$ for each $i \neq j$.  Thus $a=\lambda a_1 \cdots a_n$ is a $\treg$-factorization.\\
\indent Conversely, suppose $a=\lambda a_1 \cdots a_n$ is a non-trivial $\treg$-factorization.  Then $a_i \treg a_j$ for each $i \neq j$.  This means $a_i \tau a_j$ and $a_i, a_j \in \reg$.  In particular, since $n \geq 2$, we can conclude that $a_1 a_2 \cdots a_n$ is a product of regular elements, so $a \in \reg$.  This means $a=\lambda a_1 \cdots a_n$ is a $\tau$-regular-factorization.
\end{proof}

\begin{theorem} \label{thm: treg-reg} Let $R$ be a commutative ring with $1$ and let $\tau$ be a symmetric relation on $R^{\#}$.  Let $\treg:=\tau \cap \left(\reg \times \reg \right)$.  For $a\in \reg$, the following are equivalent.
\\
(1) $a$ is a $\tau$-regular-atom.
\\
(2) $a$ is a $\treg$-atom.
\\
(3) $a$ is a $\treg$-strong atom.
\\
(4) $a$ is a $\treg$-m-atom.
\\
(5) $a$ is a $\treg$-unrefinable atom.
\\ 
(6) $a$ is a $\treg$-very strong atom.
\end{theorem}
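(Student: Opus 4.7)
The plan is to reduce every one of conditions (1)--(6) to the single statement ``$a$ has no non-trivial $\treg$-factorization,'' at which point their mutual equivalence becomes automatic. The two tools driving the reduction are Lemma \ref{lem: treg-nontrivial} and the collapse of the three associate relations $\sim$, $\approx$, $\cong$ on the set of factors of a regular element.

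First I would handle (1) against the rest. By Lemma \ref{lem: treg-nontrivial}, the non-trivial $\tau$-regular-factorizations and non-trivial $\treg$-factorizations of $a$ are the same set, while trivial factorizations agree on the nose. The equivalence (1) $\Leftrightarrow$ (4) of Proposition \ref{prop: atoms} then says (1) amounts to ``$a$ has only trivial $\tau$-regular-factorizations,'' which by the Lemma is ``$a$ has only trivial $\treg$-factorizations.'' It now suffices to show each of (2)--(6) is equivalent to this last statement.

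For (5) this is the very definition of a $\treg$-unrefinable atom. For (6), since $a\in\reg$ automatically satisfies $a\cong a$, being a $\treg$-very strong atom collapses to ``$a$ has no non-trivial $\treg$-factorization.'' For (2), (3), and (4) the key observation is that any factor $a_i$ in a $\treg$-factorization $a=\lambda a_1\cdots a_n$ of a regular $a$ is itself regular, because $\reg$ is saturated under division, so on such factors $\sim$, $\approx$, and $\cong$ all coincide. Hence if a non-trivial $\treg$-factorization existed satisfying any of the associate-style conclusions in (2), (3), or (4), then $a\cong a_i$ for some $i$, and the argument (5) $\Rightarrow$ (4) of Proposition \ref{prop: atoms} forces the complementary product to be a unit, contradicting non-triviality. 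Conversely, having only trivial $\treg$-factorizations makes (2)--(4) hold vacuously on non-trivial factorizations and tautologically on the trivial ones.

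The main obstacle is not computational but categorical: the paper's framework carefully distinguishes five flavors of $\tau$-irreducibility precisely because they genuinely differ in general commutative rings, and the definition of ``very strongly atomic'' bakes in the reflexivity hypothesis $a\cong a$, which, as the author's idempotent example earlier in this section shows, can fail badly for zero-divisors. The proof succeeds because regularity simultaneously supplies $a\cong a$, forces every factor of $a$ to be regular (so the three associate relations collapse), and---via Lemma \ref{lem: treg-nontrivial}---identifies the two factorization theories at $a$; together these wash out every distinction among (1)--(6). Beyond invoking Proposition \ref{prop: atoms}, no calculation is needed.
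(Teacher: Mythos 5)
Your proof is correct. The ingredients are exactly those of the paper's own argument --- Lemma \ref{lem: treg-nontrivial}, the collapse of $\sim$, $\approx$, and $\cong$ on (factors of) regular elements, and the unit-forcing step from the proof of Proposition \ref{prop: atoms} --- though you package them differently: you reduce all six conditions to the single normal form ``$a$ admits only trivial $\treg$-factorizations,'' whereas the paper leans on the standing implication diagram of Theorem \ref{thm: unrefinable} so that only $(2)\Rightarrow(6)$ and $(1)\Leftrightarrow(5)$ need to be verified by hand. The hub-and-spoke organization is slightly longer but self-contained; either route is sound.
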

\begin{proof} When we consider Theorem \ref{thm: unrefinable}, it suffices to show that $(2) \Rightarrow (6)$ and then we show that $(1) \Leftrightarrow (5)$.  Let $a \in \reg$, be a $\treg$-atom.  Since $a \in \reg$, we have $a\cong a$ since $a=ra$ implies $r=1$.  Furthermore, if $a=\lambda a_1 \cdots a_n$ is a $\treg$-factorization of $a$, then $a \sim a_i$ for some $1 \leq i \leq n$.  Since $a \in \reg$, $a \cong a_i$ and we have shown that $a$ is a $\treg$-very strongly atom.
\\
\indent (1) $\Leftrightarrow$ (5) In light of Lemma \ref{lem: treg-nontrivial}, $a$ has a non-trivial $\tau$-regular factorization if and only if $a$ has a non-trivial $\treg$-factorization.
\end{proof}
\begin{corollary} \label{cor: treg-atomic-factorizations} Let $R$ be a commutative ring with $1$ and let $\tau$ be a symmetric relation on $R^{\#}$.  Let $\treg:=\tau \cap \left(\reg \times \reg \right)$. Let $\alpha \in \{$ atomic, strongly atomic, m-atomic, unrefinably atomic $\}$.  Let $a\in \reg$ be a non-unit, then $a=\lambda a_1 \cdots a_n$ is a $\treg$-$\alpha$-factorization if and only if $a=\lambda a_1 \cdots a_n$ is a $\tau$-regular-atomic factorization.
\end{corollary}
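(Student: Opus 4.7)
The plan is to combine the two preceding results—Lemma 4.4 on the coincidence of non-trivial factorizations and Theorem 4.5 on the coincidence of atom types—essentially mechanically, after a preliminary observation that guarantees every factor appearing is regular. The key background fact is that $\reg$ is a saturated multiplicatively closed subset of $R$: hence if $a \in \reg$ and $a = \lambda a_1 \cdots a_n$ is any factorization whatsoever (with $\lambda \in U(R)$ and $a_i \in R^{\#}$), then each $a_i$ divides $a$ and therefore each $a_i \in \reg$. This observation is the bridge that lets us apply Theorem 4.5 termwise to the $a_i$.

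For the forward direction, suppose $a = \lambda a_1 \cdots a_n$ is a $\treg$-$\alpha$-factorization, where $\alpha$ is one of atomic, strongly atomic, m-atomic, or unrefinably atomic. If $n \geq 2$, Lemma 4.4 immediately converts this into a $\tau$-regular factorization; if $n = 1$ the factorization is trivial on both sides and there is nothing to check about the $\treg$-relation. By the preliminary observation each $a_i \in \reg$, so Theorem 4.5 applies and tells us that being a $\treg$-$\alpha$-atom for any choice of $\alpha$ in the list is equivalent to being a $\tau$-regular atom. Therefore each $a_i$ is a $\tau$-regular atom, and $a = \lambda a_1 \cdots a_n$ is a $\tau$-regular-atomic factorization.

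The converse is symmetric: if $a = \lambda a_1 \cdots a_n$ is a $\tau$-regular-atomic factorization, then $a \in \reg$ and each $a_i \in \reg$ by definition. Lemma 4.4 converts the non-trivial case ($n \geq 2$) into a $\treg$-factorization, while the trivial case is handled directly. Theorem 4.5 now promotes each $\tau$-regular atom $a_i$ to a $\treg$-$\alpha$-atom for the chosen $\alpha$, producing a $\treg$-$\alpha$-factorization.

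The main (minor) obstacle is merely the bookkeeping around the $n = 1$ boundary case, since Lemma 4.4 is phrased in terms of non-trivial factorizations; this is easily dispatched by noting that trivial factorizations require no conditions beyond $\lambda \in U(R)$ and $a_1 = \lambda^{-1} a$, so they exist identically on both sides and reduce both notions of $\alpha$-factorization to the same condition on the single factor $a_1 = \lambda^{-1} a \in \reg$. Because the substantive content of the corollary is packaged entirely into Lemma 4.4 and Theorem 4.5, no further combinatorial or structural argument is required.
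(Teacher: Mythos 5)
Your proof is correct and follows essentially the same route as the paper, which simply declares the corollary immediate from the equivalence of the various $\treg$-atom notions with $\tau$-regular-atoms for regular elements (Theorem \ref{thm: treg-reg}), implicitly relying on Lemma \ref{lem: treg-nontrivial} and saturation of $\reg$ exactly as you make explicit. Your added care with the $n=1$ case and the termwise application to the $a_i$ is just a more detailed writeup of the same argument.
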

\begin{proof} This is immediate from what we have shown in Theorem \ref{thm: treg-reg}.
\end{proof}
%This theorem is very weirdly stated
\begin{theorem}\label{thm: treg-zd}Let $R$ be a commutative ring with $1$ and let $\tau$ be a symmetric relation on $R^{\#}$.  Let $\treg:=\tau \cap \left(\reg \times \reg \right)$.  If $a\in Z(R)$, then following hold.
\\
%(1) $a$ is a $\tau$-regular-atom.
%\\
(1) $a$ is a $\treg$-atom.
\\
(2) $a$ is a $\treg$-strong atom.
\\
(3) $a$ is a $\treg$-m-atom.
\\
(4) $a$ is a $\treg$-unrefinable atom.
\end{theorem}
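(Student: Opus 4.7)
The key observation is that $\treg$ is built to relate only regular elements, so any non-trivial $\treg$-factorization of $a$ is forced to consist entirely of regular factors. The plan is to show that $a \in Z(R)$ admits only trivial $\treg$-factorizations, from which (1)--(4) are immediate.

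First I would verify the following claim: if $a = \lambda a_1 \cdots a_n$ is a $\treg$-factorization with $n \geq 2$, then every $a_i$ must lie in $\reg$. Indeed, for each $i$, pick any $j \neq i$ (which exists since $n \geq 2$); the relation $a_i \treg a_j$ forces $(a_i, a_j) \in \reg \times \reg$ by the very definition $\treg = \tau \cap (\reg \times \reg)$, so $a_i \in \reg$. But then $a = \lambda a_1 \cdots a_n$ is a product of a unit and regular elements, and since $\reg$ is multiplicatively closed, $a \in \reg$, contradicting the hypothesis $a \in Z(R)$.

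Consequently, the only $\treg$-factorizations of $a$ are the trivial ones $a = \lambda a_1$, which immediately yields (4): $a$ is a $\treg$-unrefinable atom. For (1)--(3), in any such trivial factorization we have $a = \lambda a_1$ with $\lambda \in U(R)$, so $a \approx a_1$ and hence $a \sim a_1$. Taking $i = 1$ (the only index) verifies simultaneously that $a \sim a_i$ for some $i$, that $a \approx a_i$ for some $i$, and that $a \sim a_i$ for all $i$, giving (1), (2), and (3) respectively.

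There is no real obstacle here: the whole argument rides on the tautology that a non-trivial product of $\treg$-related elements is regular. It is worth flagging (though the statement already reflects this) that $\treg$-very strong atomicity is deliberately omitted, since a typical zero-divisor $a$ fails $a \cong a$ (for instance any non-trivial idempotent, as noted in the preceding example), so the very strongly atomic condition cannot be recovered by the same argument.
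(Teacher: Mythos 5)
Your proof is correct and rests on exactly the same key observation as the paper's: a non-trivial $\treg$-factorization of $a$ would force every factor into $\reg$, making $a$ regular and contradicting $a\in Z(R)$, so only trivial factorizations exist. The only cosmetic difference is that you verify (1)--(3) directly from the trivial factorizations, whereas the paper deduces them from the implication diagram of Theorem \ref{thm: unrefinable}; both are fine.
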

\begin{proof}
 %We again apply Lemma \ref{lem: treg-nontrivial} to see that $a$ has a non-trivial $\tau$-regular factorization if and only if $a$ has a non-trivial $\treg$-factorization, that is (1) $\Leftrightarrow$ (5).  
By Theorem \ref{thm: unrefinable}, it suffices to show, for $a \in Z(R)$, $(1) \Rightarrow (4)$.  Let $a$ be a $\treg$-atom, and suppose $a=\lambda a_1 \cdots a_n$ is a non-trivial $\treg$-factorization.  This implies $n\geq 2$, and therefore $a_i \treg a_j$ for each $i \neq j$.  In particular, $a_i\in \reg$ for all $1 \leq i \leq n$.  This means $a$ is a product of regular elements and is therefore regular, a contradiction.
\end{proof}

\begin{theorem}\label{thm: treg-atomic}Let $R$ be a commutative ring with $1$ and let $\tau$ be a symmetric relation on $R^{\#}$.  Let $\treg:=\tau \cap \left(\reg \times \reg \right)$. The following are equivalent.
\\
(1) $R$ is $\tau$-regular-atomic.
\\
(2) $R$ is a $\treg$-atomic.
\\
(3) $R$ is a $\treg$-strongly atomic.
\\
(4) $R$ is a $\treg$-m-atomic.
\\
(5) $R$ is $\treg$-unrefinably atomic.
\end{theorem}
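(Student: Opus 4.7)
The plan is to reduce the five equivalences to a pair of key implications by leveraging the earlier results. First, I would invoke Theorem \ref{thm: unrefinable} to observe that, at the ring level, (5) $\Rightarrow$ (3) $\Rightarrow$ (2) and (5) $\Rightarrow$ (4) $\Rightarrow$ (2) follow automatically, simply because the pointwise implications between $\treg$-atom notions lift to factorizations of every non-unit (a $\treg$-unrefinably-atomic factorization is in particular a $\treg$-strongly-atomic, $\treg$-m-atomic, and $\treg$-atomic factorization). The cycle then closes once I establish (2) $\Rightarrow$ (1) and (1) $\Rightarrow$ (5).

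For (2) $\Rightarrow$ (1), I would take a regular non-unit $a \in \reg$ and one of its guaranteed $\treg$-atomic factorizations $a = \lambda a_1 \cdots a_n$. By saturation of $\reg$, each $a_i$ is regular, and by Theorem \ref{thm: treg-reg} each $a_i$ is then a $\tau$-regular-atom. Lemma \ref{lem: treg-nontrivial} upgrades the $\treg$-factorization to a $\tau$-regular-factorization when $n \geq 2$, while the $n = 1$ case is trivially one; either way, $a$ admits a $\tau$-regular-atomic factorization, so $R$ is $\tau$-regular-atomic.

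For (1) $\Rightarrow$ (5), I would take an arbitrary non-unit $a \in R^{\#}$ and split on whether $a$ is regular. If $a \in \reg$, its guaranteed $\tau$-regular-atomic factorization is already a $\treg$-unrefinably-atomic factorization by Corollary \ref{cor: treg-atomic-factorizations}. If $a \in Z(R)$, Theorem \ref{thm: treg-zd} tells us $a$ is itself a $\treg$-unrefinable atom, so the trivial factorization $a = 1 \cdot a$ is already a $\treg$-unrefinably-atomic factorization. No step here is difficult; the only place to take any care is in the zero-divisor case of (1) $\Rightarrow$ (5), where one must notice that although $\tau$-regular-atomicity says nothing directly about non-regular elements, restricting $\tau$ to $\treg$ collapses every zero-divisor into an atom of every type in the list, so the hypothesis transfers painlessly.
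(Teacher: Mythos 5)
Your proposal is correct and follows essentially the same route as the paper: the same split of an arbitrary non-unit into the zero-divisor case (handled by Theorem \ref{thm: treg-zd}, giving the trivial factorization $a=1\cdot a$) and the regular case (handled by Theorem \ref{thm: treg-reg} together with Lemma \ref{lem: treg-nontrivial}, i.e.\ Corollary \ref{cor: treg-atomic-factorizations}). The only difference is organizational --- you close a cycle $(1)\Rightarrow(5)\Rightarrow(3),(4)\Rightarrow(2)\Rightarrow(1)$ using Theorem \ref{thm: unrefinable}, whereas the paper deduces each equivalence with (1) directly from Corollary \ref{cor: treg-atomic-factorizations} --- which changes nothing of substance.
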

\begin{proof} Let $a$ be a non-unit in $R$.  Then $a \in Z(R)$ or $a\in \reg$.  If $a\in Z(R)$, we apply Theorem \ref{thm: treg-zd} to see that $a$ itself is $\treg$-atomic, $\treg$-strongly atomic, $\treg$-m-atomic, and $\treg$-unrefinably atomic and $a=1\cdot a$ is a $\treg$-atomic, $\treg$-strongly atomic, $\treg$-m-atomic, and $\treg$-unrefinably atomic factorization of $a$.  For $R$ to be a $\tau$-regular-atomic ring, we need only check the regular elements for $\tau$-regular atomic factorizations.  If $a\in \reg$, we apply Corollary \ref{cor: treg-atomic-factorizations} to see that $a$ has a $\tau$-regular-atomic factorization if and only if $a$ has a $\treg$-atomic (resp. $\treg$-strongly atomic, $\treg$-m-atomic, $\treg$-unrefinably atomic) factorization.  This completes the equivalence since we have checked both the zero-divisors as well as the regular elements.  
\end{proof}
\begin{lemma} \label{lem: trivial} Let $R$ be a commutative ring with $1$ and let $\tau$ be a symmetric relation on $R^{\#}$.  Let $a=\lambda (\lambda^{-1}a)=\mu (\mu^{-1}a)$ be two trivial factorizations of $a$.  Then we have the following
\\
(1) $\lambda^{-1}a$ and $\mu^{-1}a$ are associates.
\\
(2) $\lambda^{-1}a$ and $\mu^{-1}a$ are strong associates.
\end{lemma}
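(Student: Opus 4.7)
The plan is to verify both parts directly from the definitions of associate and strong associate given in Section~\ref{sec: prelim}, treating each as a short unit-theoretic computation.

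For part (1), I would observe that multiplication by a unit preserves the principal ideal generated by an element. Since $\lambda, \mu \in U(R)$, we have $\lambda^{-1}, \mu^{-1} \in U(R)$ as well, so $(\lambda^{-1}a) = (a) = (\mu^{-1}a)$. By the definition $x \sim y \iff (x) = (y)$, this immediately yields $\lambda^{-1}a \sim \mu^{-1}a$.

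For part (2), I would exhibit an explicit unit witnessing the strong associate relation. Set $\nu := \lambda^{-1}\mu$; since $U(R)$ is closed under multiplication and inverses, $\nu \in U(R)$. Then
\[
\nu \cdot (\mu^{-1}a) \;=\; \lambda^{-1}\mu\mu^{-1}a \;=\; \lambda^{-1}a,
\]
so by the definition $x \approx y \iff x = u y$ for some $u \in U(R)$, we conclude $\lambda^{-1}a \approx \mu^{-1}a$.

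There is no real obstacle here; the lemma is essentially a bookkeeping statement confirming that the two "leftover" factors in trivial factorizations of $a$ are automatically related by the two weaker equivalence relations $\sim$ and $\approx$. The content of the lemma (and the reason for stating it) is to highlight by contrast that the very strong associate relation $\cong$ need \emph{not} hold between $\lambda^{-1}a$ and $\mu^{-1}a$ in general — which is the phenomenon illustrated by the preceding example in $K \times K$ and which motivates the subsequent discussion.
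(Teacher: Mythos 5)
Your proof is correct and matches the paper's argument in all essentials: the paper likewise exhibits the unit $\mu^{-1}\lambda$ to get $\lambda^{-1}a \approx \mu^{-1}a$ and then notes that $\approx$ implies $\sim$, whereas you prove (1) directly from equality of principal ideals — an immaterial difference. Your closing remark about the contrast with $\cong$ also correctly identifies the lemma's role in the paper.
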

\begin{proof} $(\mu^{-1} \lambda)(\lambda^{-1}a)=\mu^{-1}a$ with $(\mu^{-1} \lambda)\in U(R)$ proves $\lambda^{-1}a \approx \mu^{-1}a$.  If $\lambda^{-1}a \approx \mu^{-1}a$, then $\lambda^{-1}a \sim \mu^{-1}a$.  This proves both (2) and (1).
\end{proof}

\begin{remark}Given the above situation, $\lambda^{-1}a$ and $\mu^{-1}a$ need not be very strong associates.  For instance $R=\R \times \R$, 
$$(1,0)=(1,1)(1,0)=(-1,-1)(-1,0)$$
yet $(1,0) \not \cong (-1,0)$.
\end{remark}

\begin{theorem}\label{thm: treg-UFR} Let $R$ be a commutative ring with $1$ and let $\tau$ be a symmetric relation on $R^{\#}$.  Let $\treg:=\tau \cap \left(\reg \times \reg \right)$. Let $\alpha \in \{$ atomic, strongly atomic, m-atomic, unrefinably atomic $\}$ and $\beta \in \{$ associate, strongly associate $\}$.  Then we have the following.
\\
(1) $R$ satisfies $\tau$-regular-ACCP if and only if $R$ satisfies $\treg$-ACCP.
\\
(2) $R$ is a $\tau$-regular-UFR if and only if $R$ is a $\treg$-$\alpha$-$\beta$-UFR.
\\
(3) $R$ is a $\tau$-regular-HFR if and only if $R$ is a $\treg$-$\alpha$-HFR.
\\
(4) $R$ is a $\tau$-regular-BFR if and only if $R$ is a $\treg$-BFR.
\\
(5) $R$ is a $\tau$-regular-idf ring if and only if $R$ is a $\treg$-$\alpha$-$\beta$-df ring.
\\
(6) $R$ is a $\tau$-regular-atomic $\tau$-regular-idf ring if and only if $R$ is a $\treg$-$\alpha$, $\treg$-$\alpha$-$\beta$-df ring.
\\
(7) $R$ is a $\tau$-regular-WFFR if and only if $R$ is a $\treg$-$\beta$-WFFR.
\\
(8) $R$ is a $\tau$-regular-FFR if and only if $R$ is a $\treg$-$\beta$-FFR.
\\
\indent If $\tau$ is refinable, then $(6) \Leftrightarrow (7) \Leftrightarrow (8)$. 
\end{theorem}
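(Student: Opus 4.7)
The plan is to reduce each of the eight biconditionals to two separate cases: behavior on regular non-units, where Lemma~\ref{lem: treg-nontrivial} and Corollary~\ref{cor: treg-atomic-factorizations} show that the $\tau$-regular and $\treg$ frameworks produce literally the same factorizations (trivial or not, $\alpha$ or not) for every $a \in \reg^{\#}$; and behavior on zero-divisors, where the argument inside Theorem~\ref{thm: treg-zd} shows that any non-trivial $\treg$-factorization of $a$ would force $a$ to be regular, so every $a \in Z(R)$ admits only trivial $\treg$-factorizations (length one) and has $\{\lambda^{-1}a : \lambda \in U(R)\}$ as its entire set of $\treg$-divisors. With these two reductions in hand, the $\treg$-finite-factorization hypotheses on $R$ restrict to their $\tau$-regular counterparts on $\reg^{\#}$ and become automatic at $Z(R)$.

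Item by item, (1), (3), and (4) are then immediate: on $\reg^{\#}$ the relevant chains, lengths, and bounds coincide verbatim with the $\tau$-regular ones (a proper $\treg$-divisibility step forces a non-trivial $\treg$-factorization, hence a $\tau$-regular-factorization by Lemma~\ref{lem: treg-nontrivial}), and on $Z(R)$ either the condition is vacuous or is trivially satisfied with bound $N(a) = 1$. Part (2) is the most delicate, since the UFR rearrangement at a zero-divisor requires pairing two trivial factorizations $a = \lambda(\lambda^{-1}a) = \mu(\mu^{-1}a)$; Lemma~\ref{lem: trivial} produces exactly strong associativity (and hence associativity) between $\lambda^{-1}a$ and $\mu^{-1}a$, which is precisely why $\beta$ is restricted to $\{\text{associate}, \text{strongly associate}\}$ and $\alpha$ excludes very-strongly-atomic. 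Parts (5)--(8), the divisor-counting items, fall out by the same dichotomy: the $\treg$-divisors of any $a \in Z(R)$ form a single $\approx$-class (hence a single $\sim$-class), so any ``finitely many up to $\beta$'' clause is automatic there, and on $\reg^{\#}$ the $\treg$-divisors agree with the $\tau$-regular divisors, while the $\treg$-$\alpha$ divisors agree with the $\tau$-regular-irreducible divisors via Corollary~\ref{cor: treg-atomic-factorizations}.

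The final assertion $(6) \Leftrightarrow (7) \Leftrightarrow (8)$ under refinable $\tau$ then reduces to Proposition~\ref{thm:reg-ffr}, which is stated for $\tau$ itself and delivers the equivalence of $\tau$-regular-FFR, $\tau$-regular-WFFR, and $\tau$-regular-atomic $\tau$-regular-idf; translating these three through (6), (7), and (8) yields the claim. The main obstacle throughout is purely bookkeeping: verifying in each item that the advertised ranges of $\alpha$ and $\beta$ are exactly those compatible with the trivial factorizations forced on zero-divisors, so that nothing in the $\treg$-side of the biconditional introduces an obstruction invisible to the $\tau$-regular side. The author's earlier example with $R = K \times K$ and the idempotent $(1,0)$ already shows that $\treg$-very-strongly-atomic and very-strongly-associativity fail at zero-divisors, which is why the listed ranges for $\alpha$ and $\beta$ are the best one can hope for, and this is the subtlety I would flag carefully in each use of Lemma~\ref{lem: trivial}.
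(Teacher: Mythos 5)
Your proposal is correct and follows essentially the same route as the paper: the same dichotomy between zero-divisors (which admit only trivial $\treg$-factorizations, handled via Theorem~\ref{thm: treg-zd} and Lemma~\ref{lem: trivial}) and regular non-units (where Lemma~\ref{lem: treg-nontrivial}, Theorem~\ref{thm: treg-reg}, and Corollary~\ref{cor: treg-atomic-factorizations} make the two frameworks coincide), with the final refinability claim delegated to Proposition~\ref{thm:reg-ffr}. Your explicit flagging of why $\alpha$ and $\beta$ must exclude the very strong variants is exactly the subtlety the paper's surrounding discussion and the $K\times K$ example are meant to address.
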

\begin{proof}
(1) The statement that $(a) \subsetneq (a_1)$ with $a_1\mid_\tau a$ implies that $a=\lambda a_1 a_2 \cdots a_n$.  We notice here that $n \geq 2$ or else we would have $a=\lambda a_1$ or $a\approx a_1$ which implies $(a)=(a_1)$, a contradiction.  So these properly ascending chains yield non-trivial factorizations at each step.  Thus any properly ascending chain of principal ideals 
\begin{equation} 
\label{eq: ACCP}(a_1) \subsetneq (a_2) \subsetneq (a_3) \subsetneq \cdots 
\end{equation}
such that $a_{i+1} \mid_{\treg} a_i$ yields a $\tau$-regular factorization of $a_i$ with $a_{i+1}$ as a $\tau$-regular factor.  Conversely, any ascending chain as in \eqref{eq: ACCP} with $a_{i}$ regular for all $i$ and $a_{i+1}$ occurring as a $\tau$-factor in some $\tau$-regular factorization of $a_i$ yields a $\treg$-factorization of $a_i$ as well.  Hence $R$ fails to satisfy $\tau$-regular ACCP if and only if $R$ fails to satisfy $\treg$-ACCP, and the proof is complete.
\\
\indent (2) We know from Theorem \ref{thm: treg-atomic} that $R$ is $\tau$-regular-$\alpha$ if and only if $R$ is $\treg$-$\alpha$.  Let $a\in R$ be a non-unit.  If $a\in Z(R)$, we know from Theorem \ref{thm: treg-zd} that $a$ is $\treg$-$\alpha$.  Furthermore,  any trivial $\treg$-factorization of $a$ is unique up to $\beta$ by Lemma \ref{lem: trivial}.  For $R$ to be a $\tau$-regular UFR, we need only check the regular elements.  Let $a\in \reg$.  We know from Corollary \ref{cor: treg-atomic-factorizations}, for regular elements, $\tau$-atomic and $\treg$-$\alpha$-factorizations of $a$ coincide, so the uniqueness up to rearrangement and $\beta$ is immediate.
\\
\indent (3) By Theorem \ref{thm: treg-atomic}, $R$ is $\tau$-regular-$\alpha$ if and only if $R$ is $\treg$-$\alpha$.  If $a\in Z(R)$, then $a$ is $\treg$-$\alpha$ and has only trivial $\treg$-factorizations each of which has length $1$.  For $a\in \reg$, $\tau$-atomic and $\treg$-$\alpha$-factorizations of $a$ coincide by Corollary \ref{cor: treg-atomic-factorizations}, and the equivalence is clear.
\\
\indent (4) For $a\in Z(R)$, all $\treg$-factorizations are trivial and have length 1.  By Lemma \ref{lem: treg-nontrivial}, the set of non-trivial $\tau$-regular factorizations and $\treg$-factorizations coincide and the equivalence is apparent.
\\
\indent (5) If $a\in Z(R)$, $a$ itself is $\treg$-$\alpha$ and there is precisely one unique $\treg$-$\alpha$-divisor of $a$ up to $\beta$ since all trivial $\treg$-factorizations are $\beta$ from Lemma \ref{lem: trivial}.  If $a \in \reg$, then the set of $\tau$-regular atomic divisors and $\treg$-$\alpha$-divisors of $a$ are all regular and hence coincide by Theorem \ref{thm: treg-reg} so the equivalence is clear.
\\
\indent (6) This is simply (5) plus Theorem \ref{thm: treg-atomic}.
\\
\indent (7) For $a \in Z(R)$, the only $\treg$-divisors of $a$ are unit multiples of $a$, so there is only one $\treg$-divisor of $a$ up to $\beta$.  For $a\in \reg$, since the set of $\tau$-regular factorizations and the set of $\treg$-factorizations of $a$ are the same, the set of $\treg$-divisors and $\tau$-regular divisors coincide and are regular, so the associate relations also coincide.  Thus the equivalence follows.
\\
(8) \indent For $a \in Z(R)$, the only $\treg$-factorizations of $a$ are of the form $a=\lambda (\lambda^{-1}a)$, so there is only one $\treg$-factorization of $a$ up to $\beta$.  For $a\in \reg$, since the set of $\tau$-regular factorizations and the set of $\treg$-factorizations of $a$ are the same.  Moreover, the set of $\treg$-factors and $\tau$-regular factors coincide and are regular, hence the associate relations also coincide.  Thus the equivalence follows.
\end{proof}

\section{Relationship with Other Finite Factorization Properties} \label{sec: other props}
In this final section, we would like to demonstrate where the rings satisying the properties in the present article fit in with the various finite factorization properties already existing in the literature.  That is, would like to compare the $\tau$-regular and $\treg$-finite factorization properties with the regular factorization from \cite{Valdezleon2}, the $\tau$-finite factorization properties defined originally in \cite{Mooney} as well as the $\tau$-U-finite factorization properties defined in \cite{Mooney2}.  A note to the reader, many of these terms were defined in Section \ref{sec: prelim}.
\\
\indent The following theorem demonstrates that the $\tau$-finite factorization properties defined in \cite{Mooney} are stronger than the ones in the present article.
\begin{theorem} \label{thm: tau - tau-reg}  Let $R$ be a commutative ring with $1$ and let $\tau$ be a symmetric relation on $R^{\#}$.  Let $\alpha \in \{$atomic, strongly atomic, m-atomic, unrefinably atomic very strongly atomic$ \}$, $\beta \in \{$associate, strong associate, very strong associate$\}$.  Then we have the following:
\\
(1) If $R$ is a $\tau$-$\alpha$-$\beta$-UFR, then $R$ is a $\tau$-r-UFR.
\\
(2) If $R$ is a $\tau$-$\alpha$-HFR, then $R$ is a $\tau$-r-HFR.
\\
(3) If $R$ is a $\tau$-$\beta$-FFR, then $R$ is a $\tau$-r-FFR.
\\
(4) If $R$ is a $\tau$-$\beta$-WFFR, then $R$ is a $\tau$-r-WFFR.
\\
(5) If $R$ is a $\tau$-$\beta$-$\alpha$ df ring, then $R$ is a $\tau$-r idf ring.
\\
(6) If $R$ is a $\tau$-BFR, then $R$ is a $\tau$-r-BFR.
\\
(7) If $R$ satisfies $\tau$-ACCP, then $R$ satisfies $\tau$-r-ACCP.
\\
(8) If $R$ is $\tau$-$\alpha$, then $R$ is $\tau$-r-atomic.\\
This yields the following diagram where $\nabla$ represents $\tau$ is refinable.
$$\xymatrix{
    \tau \text{-} \alpha \text{-} \beta \text{-UFR} \ar@{=>}[d]       &        \tau \text{-r-HFR} \ar@{=>}^{\nabla}[dr]     &   \tau \text{-} \alpha \text{-HFR} \ar@{=>}[l]          &   \tau{-} \text{ACCP} \ar@{=>}[d]   &         \tau \text{-} \alpha \ar@{=>}[d]                      \\
\tau \text{-r-UFR} \ar@{=>}[ur] \ar@{=>}^{\nabla}[r]  & \tau \text{-r-FFR} \ar@{=>}[r] \ar@{<=>}^{\nabla}[d]  & \tau \text{-r-BFR} \ar@{=>}[r]^{\nabla}& \tau\text{-r-ACCP} \ar@{=>}^{\nabla}[r] & \tau\text{-r-atomic} \\
 \tau \text{-} \beta \text{-WFFR} \ar@{=>}[r]           & \tau \text{-r-WFFR} \ar@{<=>}^{\nabla}[d]                     &   \tau \text{-} \text{BFR} \ar@{=>}[u]           &                     \\
 \tau\text{-}\alpha \text{-} \beta \text{ df ring}   \ar@{=>}[r]        & \tau\text{-}\text{r-idf ring} & \tau\text{-}\beta \text{-FFR} \ar@{=>}[uul]& &   
            }$$
\end{theorem}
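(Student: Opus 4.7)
The overarching strategy is to exploit two facts uniformly across all eight parts. First, $\reg$ is a saturated multiplicative set, so every divisor of $a \in \reg$ is regular; consequently, for $a \in \reg^{\#}$ the set of $\tau$-factorizations of $a$ coincides with the set of $\tau$-r-factorizations of $a$. Second, on $\reg$ the three associate relations $\sim$, $\approx$, $\cong$ coincide, and by Proposition \ref{prop: atoms} the several notions of $\tau$-irreducibility collapse to a single notion of $\tau$-r-atom. Each implication then reduces to a routine verification that the ambient hypothesis, when restricted to regular non-units, yields the corresponding $\tau$-r-property.

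For the existence-type parts (7) and (8), the argument is almost immediate. In (8), a $\tau$-$\alpha$-factorization $a=\lambda a_1 \cdots a_n$ of $a \in \reg^{\#}$ has every $a_i$ regular, and by Proposition \ref{prop: atoms} a regular $\tau$-$\alpha$ element is a $\tau$-r-atom, so this is a $\tau$-r-atomic factorization. In (7), a properly ascending chain of regular principal ideals whose successive divisibilities arise from $\tau$-r-factorizations is exactly a chain whose divisibilities arise from $\tau$-factorizations, so $\tau$-ACCP forces it to terminate.

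For the boundedness and finiteness parts (3)--(6), the argument reduces to ``restricting a finite set to its regular piece.'' Since every $\tau$-r-factorization of a regular $a$ lies among the $\tau$-factorizations of $a$, and every $\tau$-r-divisor of a regular $a$ lies among its $\tau$-divisors, the given length bounds and finiteness counts transfer directly. The only minor bookkeeping point is that the ambient hypotheses count up to $\beta \in \{$associate, strong associate, very strong associate$\}$, while the $\tau$-r-version counts up to plain $\sim$; but $\beta$ collapses to $\sim$ on $\reg$, so no information is lost. For (5), I additionally observe that a regular $\tau$-$\alpha$-divisor of $a$ is the same thing as a regular $\tau$-r-atomic divisor, again by Proposition \ref{prop: atoms}.

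For the uniqueness-type parts (1) and (2), I invoke Proposition \ref{prop: atoms} in both directions: for $a \in \reg^{\#}$, a $\tau$-r-atomic factorization is precisely a $\tau$-$\alpha$-factorization of $a$. The hypothesized UFR (resp.\ HFR) property then gives any two $\tau$-r-atomic factorizations of $a$ equal length, together with a $\beta$-matching rearrangement in the UFR case, which again collapses to a $\sim$-matching. The only point requiring any care is verifying that the two factorizations being compared are legitimate $\tau$-$\alpha$-factorizations in the sense of the ambient hypothesis; this is once more Proposition \ref{prop: atoms}. I do not anticipate any serious obstacle, as the content of the entire theorem is that regularity of $a$ plus saturation of $\reg$ plus the coincidence of associate/atom notions on $\reg$ reduces the global hypotheses to their regular restrictions essentially by inspection.
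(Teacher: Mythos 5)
Your proposal is correct and follows essentially the same route as the paper's proof: saturation of $\reg$ identifies $\tau$-factorizations with $\tau$-r-factorizations for regular elements, and Proposition \ref{prop: atoms} collapses the atom and associate notions so that each hypothesis restricts directly to its $\tau$-r-counterpart. Your extra remark that the various $\beta$ coincide with $\sim$ on $\reg$ is a slight sharpening of what the paper leaves implicit, but the argument is otherwise the same.
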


\begin{proof}
\indent (8) Let $a\in \reg$.  Since $R$ is a $\tau$-$\alpha$, there is a $\tau$-$\alpha$-factorization of the form $a=\lambda a_1 \cdots a_n$.  Since $a\in \reg$, $a_i \in \reg$ for all $i$, by Proposition \ref{prop: atoms}, each of these factorizations is a $\tau$-r-atomic factorization of $a$, showing $R$ is $\tau$-r-atomic.
\\
\indent (2) (resp. (1)) Let $a$ be a regular non-unit element.  We have just seen that $R$ is $\tau$-r-atomic.  Given two $\tau$-r-atomic factorizations, $a=\lambda a_1\cdots a_n=\mu b_1 \cdots b_m$, this is also two $\tau$-$\alpha$-factorizations.  By assumption we have $m=n$ (resp. and there is a rearrangement so that $a_i \sim b_i$ for each $1\leq i \leq n$.)  This proves $R$ is a $\tau$-r-HFR (resp. $\tau$-r-UFR).
\\
\indent [(3)-(6)] Let $a \in \reg$.  For a regular element $a$, the set of $\tau$-r-factorizations and $\tau$-factorizations are identical, proving (3) and (6).  Similarly, since every divisor of a regular element is regular, the set of regular $\tau$-divisors is the same as the set of $\tau$-divisors, proving (4).  As in \ref{prop: atoms}, we know that the set of $\tau$-$\alpha$-divisors is the same as the set of $\tau$-r-atoms, proving (5).
\\ 
\indent (7) Suppose $(a_1) \subsetneq (a_2) \subsetneq \cdots$ is an chain of regular principal ideals such that $a_{i+1} \mid_{\tau} a_i$, then since $R$ satisfies $\tau$-ACCP, it must become stationary, proving (7).  
\end{proof}

\indent The following gives us a comparison of the regular factorization rings defined in \cite{Valdezleon2} with the rings defined in the current article.
\begin{theorem} \label{thm: reg-tau}Let $R$ be a commutative ring with $1$ and $\tau \subset \text{Reg(R)}^{\#} \times \text{Reg(R)}^{\#}$
\\(1) $R$ a $r$-BFR implies $R$ is a $\tau$-$r$-BFR
\\(2) $R$ a $r$-FFR implies $R$ is a $\tau$-$r$-FFR
\\(3) $R$ a $r$-WFFR implies $R$ is a $\tau$-$r$-WFFR
\\(4) $R$ satisfies $r$-ACCP implies $R$ satisfies $\tau$-$r$-ACCP.
%\\(3) $r$-UFR $\Rightarrow$ $\tau$-$r$-UFR
\end{theorem}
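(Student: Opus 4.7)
The plan is to exploit the hypothesis $\tau \subseteq \text{Reg}(R)^{\#}\times \text{Reg}(R)^{\#}$ directly. Under this restriction, every $\tau$-factorization automatically consists of regular elements only. Consequently, for every $a \in \text{Reg}(R)^{\#}$, every $\tau$-$r$-factorization of $a$ is in particular an ordinary regular factorization in the sense of \cite{Valdezleon2} (which corresponds to the choice $\tau = \text{Reg}(R)^{\#}\times \text{Reg}(R)^{\#}$). In other words, imposing any further relation on top of ``both factors regular'' can only shrink the collection of admissible factorizations. So for each regular non-unit $a$, the set of $\tau$-$r$-factorizations of $a$ is a sub-collection of the set of regular factorizations of $a$, and likewise the set of $\tau$-$r$-divisors of $a$ is a subset of the set of regular divisors of $a$.

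With this single inclusion in hand, each of the four implications is routine. For (1), if $N_r(a)$ bounds the length of every regular factorization of $a$, then it certainly bounds the length of every $\tau$-$r$-factorization of $a$, so the same $N_r(a)$ witnesses the $\tau$-$r$-BFR property. For (2) and (3), finitely many regular factorizations (resp.\ finitely many non-associate regular divisors) of $a$ immediately gives finitely many $\tau$-$r$-factorizations (resp.\ finitely many non-associate $\tau$-$r$-divisors), since a subset of a finite set is finite.

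For (4), suppose $(a_1) \subsetneq (a_2) \subsetneq \cdots$ is a properly ascending chain of principal ideals generated by regular elements with $a_{i+1}$ occurring as a $\tau$-factor in some $\tau$-$r$-factorization of $a_i$. By the observation above, each such $\tau$-$r$-factorization of $a_i$ is a regular factorization of $a_i$ in which $a_{i+1}$ appears as a factor, so the same chain witnesses a failure of $r$-ACCP. Contrapositively, $r$-ACCP implies $\tau$-$r$-ACCP.

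I do not anticipate a substantive obstacle: the heart of the argument is the one-line observation that $\tau \subseteq \text{Reg}(R)^{\#}\times \text{Reg}(R)^{\#}$ makes every $\tau$-$r$-factorization a regular factorization, and all four parts then follow by passage to a sub-collection. The only care needed is to phrase (4) so that the chain condition is read off divisibility inside $\text{Reg}(R)$ rather than inside all of $R$, which is automatic here because all elements appearing in the chain are assumed regular.
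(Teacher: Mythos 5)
Your proposal is correct and follows essentially the same route as the paper: the single observation that $\tau \subseteq \text{Reg}(R)^{\#}\times \text{Reg}(R)^{\#}$ forces every $\tau$-$r$-factorization (and hence every $\tau$-$r$-divisor and every link in a $\tau$-$r$-ACCP chain) to be an ordinary regular factorization, after which all four parts follow by passing to a sub-collection. The paper merely phrases (1), (2), and (4) contrapositively, which is an immaterial difference.
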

\begin{proof}
(1) Let $R$ be a $r$-BFR, but suppose $R$ is not a $\tau$-$r$-BFR, then there exists a regular element $a\in \text{Reg(R)}^{\#}$ with $\tau$-factorizations of arbitrarily long length, but any $\tau$-factorization is certainly a factorization into regular elements, so this would contradict the fact that $R$ is a $r$-BFR.
\\
\indent (2) Let $R$ be a $r$-FFR, but suppose that $R$ is not a $\tau$-$r$-FFR.  We then have a regular element $a\in \text{Reg(R)}^{\#}$ that has an infinite number of $\tau$-$r$-factorizations up to rearrangement and associate, but again each of these is also $r$-factorization and are still unique up to rearrangement and associates which contradicts the fact that $R$ is a $r$-FFR.
\\
\indent (3) Let $a \in \reg$.  Every $\tau$-r-divisor divisor is a regular divisor of $a$, so there can be only finitely many up to associate.
\\
\indent (4) Suppose we have an infinite sequence $\{a_i\}_{i=1}^{\infty}$, $a_k\in$Reg$(R)^{\#}$ for all $k$ with $a_{n+1}\mid_{\tau}a_n$ but $a_{n+1}\not \sim a_n$ for all $n\geq 1$.  But then we still have $a_{n+1}\mid_{\tau}a_n$, $a_k\in$Reg$(R)^{\#}$ for all $k$ but $a_{n+1}\not \sim a_n$ so we contradict $r$-ACCP. Concluding the proof.
\end{proof}
\begin{corollary} The r-UFRs, r-FFRs, r-HFRs, r-BFRs as defined in \cite[Section 5]{Valdezleon3} satisfy $r$-ACCP, and therefore $\tau$-$r$-ACCP.  Hence for $\tau$ refinable, each is $\tau$-$r$-atomic by Theorem \ref{thm: reg-tau} and Theorem \ref{thm: reg}.
\end{corollary}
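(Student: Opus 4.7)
The plan is to reduce the corollary to two already-established stages: first, that each of the four Valdes-Leon regular factorization classes forces $r$-ACCP, and second, that $r$-ACCP propagates through $\tau$-$r$-ACCP to $\tau$-$r$-atomicity.

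First, I would observe that the $r$-UFR, $r$-FFR, $r$-HFR, and $r$-BFR properties of \cite[Section 5]{Valdezleon3} are exactly the $\tau$-$r$-UFR, $\tau$-$r$-FFR, $\tau$-$r$-HFR, $\tau$-$r$-BFR properties of Section \ref{sec: regular} when one takes $\tau$ to be the trivial symmetric relation $R^{\#} \times R^{\#}$, under which every factorization of a regular element is automatically a $\tau$-regular factorization. With this identification in hand, the implications one needs---$r$-UFR $\Rightarrow$ $r$-HFR $\Rightarrow$ $r$-BFR, $r$-FFR $\Rightarrow$ $r$-BFR, and $r$-BFR $\Rightarrow$ $r$-ACCP---are immediate specializations of Theorem \ref{thm: reg}(1), (2), (4), (5), since $\tau = R^{\#} \times R^{\#}$ is patently refinable. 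Thus each of the four listed classes of rings satisfies $r$-ACCP.

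Second, I would invoke Theorem \ref{thm: reg-tau}(4), which carries $r$-ACCP to $\tau$-$r$-ACCP for an arbitrary symmetric $\tau$ on $R^{\#}$, and then Theorem \ref{thm: reg}(6), which upgrades $\tau$-$r$-ACCP to $\tau$-$r$-atomicity under the refinability hypothesis on $\tau$. Chaining these two implications gives the desired $\tau$-$r$-atomicity in each case.

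The main obstacle is really bookkeeping rather than mathematical substance: one must check that the plain $r$-versions of UFR, FFR, HFR, BFR from Valdes-Leon really do coincide with the $\tau$-$r$-versions of this paper for the maximal choice of $\tau$, so that the diagram following Theorem \ref{thm: reg} applies verbatim to the unadorned $r$-case. Once that correspondence is recorded, the corollary is a direct assembly of two previously proved theorems and needs no new argument.
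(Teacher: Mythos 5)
Your proposal is correct and matches the paper's (implicit) argument: the corollary is assembled exactly by chaining Theorem \ref{thm: reg-tau}(4) (to pass from $r$-ACCP to $\tau$-$r$-ACCP) with Theorem \ref{thm: reg}(6) (to pass to $\tau$-$r$-atomic under refinability). The only difference is that you justify the initial claim --- that the four Valdes-Leon classes satisfy $r$-ACCP --- internally, by specializing Theorem \ref{thm: reg} to the refinable relation $\tau = R^{\#}\times R^{\#}$, whereas the paper simply takes this fact from the cited reference; your specialization is sound and makes the corollary self-contained.
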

The following diagram summarizes our results ($\nabla$ represents $\tau$ being refinable):
$$\xymatrix{
            &        \tau\text{-r}\text{-HFR} \ar@{=>}^{\nabla}[dr]     &        \text{r}\text{-BFR} \ar@{=>}[d]       &          r\text{-ACCP}\ar@{=>}[d]        &                 \\
\tau\text{-r}\text{-UFR} \ar@{=>}[ur] \ar@{=>}^{\nabla}[r]& \tau\text{-r}\text{-FFR} \ar@{=>}[r] & \tau\text{-r}\text{-BFR} \ar@{=>}[r]^{\nabla}& \tau\text{-}r\text{-ACCP} \ar@{=>}^{\nabla}[r]& \tau\text{-r}\text{-atomic}\\
 \text{r}\text{-FFR} \ar@{=>}[ur]          &        \tau\text{-r}\text{-WFFR} \ar@{<=>}_{\nabla}[u]  \ar@{=>}[dr]   &             & \tau\text{-ACCP} \ar@{=>}[u]           &                 \\
   \text{r}\text{-WFFR} \ar@{=>}[ur]           &        \tau \text{-atomic  }\tau\text{-r}\text{-idf} \ar@{<=>}_{\nabla}[u]  \ar@{=>}[r]    & \tau\text{-r}\text{-idf}          &            &                }
$$

\begin{lemma} \label{lem: bijection} Let $R$ be a commutative ring with $1$ and let $\tau$ be a symmetric relation on $R^{\#}$.  Let $\alpha \in \{ \emptyset,$ atomic, strongly atomic, m-atomic, unrefinably atomic, very strongly atomic$ \}$.  Every non-unit element in a $\treg$-U-$\alpha$-factorization is an essential divisor.  Moreover, given a $\treg$-$\alpha$-factorization, every $\tau$-factor is essential.  When $\alpha=\emptyset$, we mean simply a $\treg$-U-factorization.  
\end{lemma}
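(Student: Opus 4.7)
The plan is to exploit two basic facts repeatedly: in a $\treg$-factorization of length at least two, every factor must be regular (because $a_i \treg a_j$ immediately forces $a_i, a_j \in \reg$), and regular elements can be cancelled in $R$. Both parts of the lemma should then reduce to rewriting the ideal-theoretic essentiality condition as a cancellation of a regular product, which forces any purported inessential factor to be a unit.

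For the first part, I would start with a $\treg$-U-$\alpha$-factorization $a = \lambda a_1 \cdots a_n \lceil b_1 \cdots b_m \rceil$, so that $\lambda a_1 \cdots a_n b_1 \cdots b_m$ is a $\treg$-factorization, and aim to show $n = 0$. First I would rule out $m = 0$, since otherwise the inessentiality condition would read $(a_i) = (1)$, forcing each $a_i$ to be a unit and contradicting $a_i \in R^{\#}$. Assuming then $n \geq 1$, I pick any inessential divisor $a_i$; since the underlying $\treg$-factorization has length at least two, $a_i \treg b_1$ forces $a_i \in \reg$ and each $b_j$ regular as well. The inessentiality condition $(a_i b_1 \cdots b_m) = (b_1 \cdots b_m)$ then supplies $r \in R$ with $b_1 \cdots b_m = r a_i b_1 \cdots b_m$, and cancelling the regular element $b_1 \cdots b_m$ yields $r a_i = 1$, so $a_i$ is a unit---a contradiction.

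For the second part, I would take a $\treg$-$\alpha$-factorization $a = \lambda a_1 \cdots a_n$ and view it as a U-factorization with every $a_i$ declared essential; there are no inessential divisors to check, so only essentiality of each $a_j$ needs verification. When $n = 1$ the condition is vacuous. When $n \geq 2$, each $a_i$ is regular, so $a_1 \cdots \widehat{a_j} \cdots a_n$ is regular for each $j$, and a failure of essentiality at index $j$, namely $(a_1 \cdots a_n) = (a_1 \cdots \widehat{a_j} \cdots a_n)$, would, by the same cancellation trick applied to the relation $a_1 \cdots \widehat{a_j} \cdots a_n = r a_j \cdot a_1 \cdots \widehat{a_j} \cdots a_n$, force $a_j$ to be a unit and contradict $a_j \in R^{\#}$.

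The only subtlety I anticipate is ensuring a uniform treatment across all choices of $\alpha$ (including $\alpha = \emptyset$): since every $\alpha$-type element is by definition a non-unit in $R^{\#}$, the ``cancellation produces a unit'' contradiction lands identically in each case, so the statement need not be proved separately for each atomicity flavor. Beyond that, the only real bookkeeping concerns the trivial length-one factorization and the degenerate possibility $m = 0$, both of which dispatch quickly; I do not expect any deeper obstacle.
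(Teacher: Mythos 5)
Your proposal is correct and follows essentially the same route as the paper: in both parts one observes that a $\treg$-factorization of length at least two consists entirely of regular elements, rewrites the failure of essentiality as an equation of the form (regular product) $=$ $r \cdot a_i \cdot$ (same regular product), and cancels to force $a_i \in U(R)$, contradicting $a_i \in R^{\#}$. The only cosmetic difference is that the paper cancels the full element $a$ using $(a)=(b_1\cdots b_m)$ to dispose of all inessential divisors at once, whereas you cancel $b_1\cdots b_m$ one inessential divisor at a time; the substance is identical.
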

\begin{proof} Let $a\in R$ be a non-unit and let $a=\lambda a_1\cdots a_n \left \lceil b_1 \cdots b_m \right \rceil$ be a $\treg$-U-$\alpha$-factorization.  Then $a=\lambda a_1\cdots a_nb_1\cdots b_m$ is a $\treg$-factorization.  If there is only one $\treg$-factor in the factorization, i.e. $m+n=1$, then this factor is certainly essential.  If it were removed then it would imply that $a$ were a unit, a contradiction.  We now may assume that $m+n \geq 2$, and therefore $a=\lambda a_1\cdots a_nb_1\cdots b_m$ is a $\treg$-factorization implies that $a$ is a product of regular elements and hence is regular.  Moreover, we have $(a)=(b_1\cdots b_m)$ so $ar=b_1\cdots b_m$ for some $r\in R$.  Hence $a=\lambda a_1\cdots a_n \cdot a \cdot r$ and $a$ is regular so cancellation implies that $1=\lambda a_1 \cdots a_n \cdot r$ and in particular $a_i \in U(R)$ for all $1 \leq i \leq n$.  Hence there can be no non-unit inessential $\treg$-divisors as desired.
\\
\indent Given a $\treg$-$\alpha$-factorization of a non-unit $a \in R$, say $a=\lambda a_1 \cdots a_n$, we show that $a_i$ is essential for each $1\leq i \leq n$.  If $n=1$, this is immediate as above.  Thus $n \geq 2$ and therefore $a_i$ is regular for each $1 \geq i\geq n$.  Suppose for a moment that $a_i$ were not essential.  Then $(a)=(a_1 \cdots a_{i-1} \widehat{a_i} a_{i+1} \cdots a_n)=(a_1 \cdots a_n)$.  But this means there is an $r\in R$ such that
$$a_1 \cdots a_{i-1} \widehat{a_i} a_{i+1} \cdots a_n= r\cdot a_1 \cdots a_n.$$
After canceling common factors, since each element on the left is regular, we see that $1=r\cdot a_i$ which means $a_i \in U(R)$, a contradiction since each $a_j \in R^{\#}$ for all $1\leq j \leq n$.  Thus $a_i$ is essential for each $1 \leq i \leq n$ and $\lambda \left \lceil a_1 \cdots a_n \right \rceil$ is indeed a $\tau$-U-$\alpha$ factorization.
\end{proof}
\indent The consequence of this lemma is that we see that $\treg$-$\alpha$-factorizations and $\treg$-U-$\alpha$-factorizations coincide and we see there is a correspondence between the sets given by the map 
$$\phi: \ \{\treg\text{-U-} \alpha \text{-factorizations}\ \}\longrightarrow \ \{\treg\text{-} \alpha \text{-factorizations}\ \}$$
is given by 
$$\lambda a_1 \cdots a_n \left \lceil b_1 \cdots b_m \right \rceil \longmapsto (\lambda a_1 \cdots a_n) b_1 \cdots b_m $$
and the inverse 
$$\phi^{-1}:\ \{\treg\text{-} \alpha \text{-factorizations}\ \}\longrightarrow \ \{\treg\text{-U-} \alpha \text{-factorizations}\ \}$$ 
is given by
$$ \lambda a_1 \cdots a_n \longmapsto \lambda \left \lceil a_1 \cdots a_n \right \rceil.$$
\\
\indent This observation allows us to further consolidate many of our finite factorization properties when it comes to regular factorization.  In particular, we formalize this by way of the following result.
\begin{theorem}  \label{thm: treg-U} Let $R$ be a commutative ring with $1$ and let $\tau$ be a symmetric relation on $R^{\#}$.  Let $\treg:=\tau \cap \left(\reg \times \reg \right)$. Let $\alpha \in \{$ atomic, strongly atomic, m-atomic, unrefinably atomic, very strongly atomic $\}$ and $\beta \in \{$ associate, strongly associate, very strongly associate $\}$.  Then for any choice of $\alpha$ and $\beta$, we have the following.
\\
(1) $R$ is $\treg$-U-$\alpha$ if and only if $R$ is $\treg$-$\alpha$.
\\
(2) $R$ satisfies $\treg$-U-ACCP if and only if $R$ satisfies $\treg$-ACCP.
\\
(3) $R$ is a $\treg$-U-$\alpha$-$\beta$-UFR if and only if $R$ is a $\treg$-$\alpha$-$\beta$-UFR.
\\
(4) $R$ is a $\treg$-U-$\alpha$-HFR if and only if $R$ is a $\treg$-$\alpha$-HFR.
\\
(5) $R$ is a $\treg$-U-BFR if and only if $R$ is a $\treg$-BFR.
\\
(6) $R$ is a $\treg$-U-$\alpha$-$\beta$-df ring if and only if $R$ is a $\treg$-$\alpha$-$\beta$-df ring.
\\
(7) $R$ is a $\treg$-U-$\alpha$, $\treg$-U-$\alpha$-$\beta$-df ring if and only if $R$ is a $\treg$-$\alpha$, $\treg$-$\alpha$-$\beta$-df ring.
\\
(8) $R$ is a $\treg$-U-$\beta$-WFFR if and only if $R$ is a $\treg$-$\beta$-WFFR.
\\
(9) $R$ is a $\treg$-U-$\beta$-FFR if and only if $R$ is a $\treg$-$\beta$-FFR.
\\
\indent If $\tau$ is refinable, then $(6) \Leftrightarrow (7) \Leftrightarrow (8)$.  
\end{theorem}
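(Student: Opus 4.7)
The plan is to exploit Lemma \ref{lem: bijection} directly, which is the entire content of this theorem: once we know that every non-unit appearing in a $\treg$-U-$\alpha$-factorization is essential, there is effectively no difference between a $\treg$-U-$\alpha$-factorization and a $\treg$-$\alpha$-factorization. First, I would formalize this by recalling the mutually inverse bijections
\[
\phi\colon \lambda a_1\cdots a_n \left\lceil b_1\cdots b_m\right\rceil \longmapsto (\lambda a_1\cdots a_n)b_1\cdots b_m, \qquad \phi^{-1}\colon \lambda a_1\cdots a_n \longmapsto \lambda \left\lceil a_1\cdots a_n\right\rceil
\]
from the remarks after Lemma \ref{lem: bijection}. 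The crucial observation to invoke repeatedly is that under $\phi^{-1}$ every $\tau$-factor becomes an essential divisor, and under $\phi$ every essential divisor corresponds to a $\tau$-factor of the same length-$n$ collection (up to reabsorbing units into $\lambda$); in particular, $\phi$ preserves the underlying multiset of non-unit $\tau$-factors up to unit adjustment.

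Next, I would dispatch items (1)--(9) by pushing the defining conditions through this bijection. For (1) the existence of a $\treg$-U-$\alpha$-factorization of $a$ is the same as the existence of a $\treg$-$\alpha$-factorization of $a$, since $\phi$ and $\phi^{-1}$ preserve the $\alpha$-property of each essential divisor / $\tau$-factor. For (2), an ascending chain witnessing failure of $\treg$-U-ACCP uses essential divisors at each step, and by the bijection this is the same data as a chain witnessing failure of $\treg$-ACCP. For (3)--(5), uniqueness up to $\beta$, equality of lengths, and boundedness of number of essential divisors translate term-by-term into the corresponding statements about $\tau$-factors, because $\phi$ is a length-preserving bijection on the non-unit part. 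Items (6)--(9) (df ring, WFFR, FFR) are likewise immediate: essential $\treg$-U-divisors of $a$ match $\treg$-divisors appearing as $\tau$-factors of $a$, and non-trivial $\treg$-U-factorizations match non-trivial $\treg$-factorizations (of length $\geq 2$), so the counting problems are identical up to $\beta$.

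Finally, the appended claim that (6)$\Leftrightarrow$(7)$\Leftrightarrow$(8) under refinability is obtained by pulling back the analogous equivalences from Proposition \ref{thm:reg-ffr}, applied to the $\treg$-$\alpha$ side via Theorem \ref{thm: treg-UFR}, and then re-transported through $\phi$. The main (and really only) obstacle is bookkeeping: each defining phrase speaks about essential divisors on one side and $\tau$-factors on the other, and one must check in each of the nine items that the property in question is visibly preserved by $\phi$. Because Lemma \ref{lem: bijection} has already established that no non-unit inessential divisors can occur in a $\treg$-U-factorization, this reduction is straightforward and each case collapses to a one-line verification; no new ideas beyond the bijection are needed.
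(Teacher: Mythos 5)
Your proposal is correct and follows essentially the same route as the paper: both rest entirely on Lemma \ref{lem: bijection} and the induced correspondence $\phi$, $\phi^{-1}$ between $\treg$-U-factorizations and $\treg$-factorizations, transporting each defining property term by term (the paper merely cites \cite[Theorem 4.3]{Mooney2} for the easy backward directions of (1) and (2) instead of re-deriving them from the bijection, and likewise leaves the refinable case to Proposition \ref{thm:reg-ffr} via Theorem \ref{thm: treg-UFR}). No substantive difference.
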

\begin{proof} (1) ($\Rightarrow$) Let $a \in R$ be a non-unit.  Then there is a $\treg$-U-$\alpha$ factorization of $a$, by Lemma \ref{lem: bijection}, this factorization is of the form $a=\lambda\left \lceil a_1 \cdots a_n \right \rceil$.  By definition, $a=\lambda a_1 \cdots a_n$ is a $\treg$-factorization and $a_i$ is $\treg$-$\alpha$ for each $1\leq i \leq n$and therefore this is a $\treg$-$\alpha$-factorization of $a$.  ($\Leftarrow$) This is shown in \cite[Theorem 4.3]{Mooney2}.
\\
\indent (2) ($\Rightarrow$) Let $a\in R$ be a non-unit.  Suppose there was an ascending chain of principal ideals of the form $(a) \subsetneq (a_1) \subsetneq (a_2) \subsetneq \cdots $ such that $a_{i+1} \mid_{\treg} a_i$ for each $i$.  Say the $\treg$-factorization for each $i$ is given by
$$a_{i}=\lambda a_{i+1}a_{i1}\cdots a_{in_i}.$$
because $a_i \subsetneq a_{i+1}$, we know that this $\treg$-factorization is non-trivial and therefore each $\treg$-factor is regular, in particular $a_i$ is regular, and therefore by Lemma \ref{lem: bijection}, is essential.  This would contradict the fact that $R$ satisfies $\treg$-U-ACCP. ($\Leftarrow$) This is shown in \cite[Theorem 4.3]{Mooney2}.
\\
\indent (3) (resp. (4)) Let $a\in R$ be a non-unit.  Then by Lemma \ref{lem: bijection}, $a$ has a $\treg$-U-$\alpha$ factorization if and only if $a$ has a $\treg$-$\alpha$-factorization.  Furthermore, since the $\treg$-U-factorizations have no inessential divisors, it is clear that the equivalence of the uniqueness (resp. constant length) of these factorizations follows as well.
\\
\indent (5) and (9) Let $a\in R$ be a non-unit.  By Lemma \ref{lem: bijection}, the correspondence shows that we may apply $\phi^{-1}$ to any $\treg$-factorization of $a$ of length $n$ and get a $\treg$-U-factorization with the same $n$ $\treg$-factors all occuring as the $\treg$ essential divisors in the corresponding $\treg$-U-factorization.  Similarly, given a $\treg$-U-factorization with $n$ essential divisors, we may apply $\phi$ to this factorization and get a $\treg$-factorization of length $n$ with the same $\treg$-factors as the essential $\treg$-divisors.  Hence there is a bound on the length of the number of essential divisors in any $\treg$-U-factorization of $a$ if and only if there is a bound on the length of any $\treg$-factorization of $a$. Moreover, this same correspondence shows that there are the same number of $\treg$-factorizations of $a$ up to $\beta$ as there are $\treg$-U-factorizations of $a$ up to $\beta$.  
\\
\indent (6) (resp. (8) Let $a\in R$ be a non-unit.  Let $a\in R$ be a non-unit.  As in the proof of (5) and (9), it is clear that the set of $\treg$-divisors and essential $\treg$-divisors of $a$ are the same by the correspondence given in Lemma \ref{lem: bijection} and map $\phi$.  This means the set of $\treg$-divisors of $a$ and essential $\treg$-divisors of $a$ up to $\beta$ are the same.  Moreover, this also means that the set of $\treg$-$\alpha$ divisors and the set of $\treg$-$\alpha$-essential divisors are the same up to $\beta$ as well.
\\
\indent (7) This follows immediately by combining the results of part (1) and (6). 
\end{proof}
\indent We can further relate the various properties by removing the very strongly atomic choice for $\alpha$ and the very strongly associate choice for $\beta$ in the above theorem.  This will allow us to combine the result of Theorem \ref{thm: treg-U} into a single theorem below.
\begin{corollary}  Let $R$ be a commutative ring with $1$ and let $\tau$ be a symmetric relation on $R^{\#}$.  Let $\treg:=\tau \cap \left(\reg \times \reg \right)$. Let $\alpha \in \{$ atomic, strongly atomic, m-atomic, unrefinably atomic $\}$ and $\beta \in \{$ associate, strongly associate $\}$.  Then for any choice of $\alpha$ and $\beta$, we have the following.
\\
(1) $R$ is $\treg$-U-$\alpha$ if and only if $R$ is $\treg$-$\alpha$ if and only if $R$ is $\tau$-regular-atomic.
\\
(2) $R$ satisfies $\treg$-U-ACCP if and only if $R$ satisfies $\treg$-ACCP if and only if $R$ satisfies $\tau$-regular-ACCP.
\\
(3) $R$ is a $\treg$-U-$\alpha$-$\beta$-UFR if and only if $R$ is a $\treg$-$\alpha$-$\beta$-UFR if and only if $R$ is $\tau$-regular-UFR.
\\
(4) $R$ is a $\treg$-U-$\alpha$-HFR if and only if $R$ is a $\treg$-$\alpha$-HFR if and only if $R$ is $\tau$-regular-HFR.
\\
(5) $R$ is a $\treg$-U-BFR if and only if $R$ is a $\treg$-BFR if and only if $R$ is $\tau$-regular-BFR.
\\
(6) $R$ is a $\treg$-U-$\alpha$-$\beta$-df ring if and only if $R$ is a $\treg$-$\alpha$-$\beta$-df ring if and only if $R$ is a $\tau$-regular-idf ring.
\\
(7) $R$ is a $\treg$-U-$\alpha$, $\treg$-U-$\alpha$-$\beta$-df ring if and only if $R$ is a $\treg$-$\alpha$, $\treg$-$\alpha$-$\beta$-df ring if and only if $R$ is $\tau$-regular-atomic, $\tau$-regular-idf ring.
\\
(8) $R$ is a $\treg$-U-$\beta$-WFFR if and only if $R$ is a $\treg$-$\beta$-WFFR if and only if $R$ is $\tau$-regular-WFFR.
\\
(9) $R$ is a $\treg$-U-$\beta$-FFR if and only if $R$ is a $\treg$-$\beta$-FFR if and only if $R$ is $\tau$-regular-FFR.
\\
\indent If $\tau$ is refinable, then $(6) \Leftrightarrow (7) \Leftrightarrow (8)$.  
\end{corollary}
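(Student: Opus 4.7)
The plan is to derive this corollary as a direct chaining of the two preceding results: Theorem \ref{thm: treg-U} supplies the first biconditional in each item (the equivalence between the $\treg$-U-property and the $\treg$-property), while Theorem \ref{thm: treg-UFR} supplies the second biconditional (the equivalence between the $\treg$-property and the $\tau$-regular-property). There is no new content to establish; the corollary simply merges these two equivalences into a single statement. The final clause about $(6) \Leftrightarrow (7) \Leftrightarrow (8)$ under refinability of $\tau$ is inherited verbatim from the same clause in Theorem \ref{thm: treg-U}.

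The first step is to observe that the restricted ranges $\alpha \in \{\text{atomic, strongly atomic, m-atomic, unrefinably atomic}\}$ and $\beta \in \{\text{associate, strongly associate}\}$ are chosen precisely so that both Theorem \ref{thm: treg-U} and Theorem \ref{thm: treg-UFR} apply simultaneously. Theorem \ref{thm: treg-U} itself allows the broader lists (including very strongly atomic and very strongly associate), but Theorem \ref{thm: treg-UFR} does not, for the reason exhibited earlier in the example $R = K \times K$: a non-trivial idempotent $e$ satisfies $e \not\cong e$, hence is never $\treg$-very strongly atomic, while on the $\tau$-regular side it is simply irrelevant since $e \in Z(R)$ and only regular elements are factored. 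Dropping the very strong choices eliminates this asymmetry and lets the two theorems compose cleanly.

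With the hypotheses aligned, I would then proceed item by item: for each of (1)--(9), invoke the corresponding item of Theorem \ref{thm: treg-U} for the first "iff", and then the corresponding item of Theorem \ref{thm: treg-UFR} for the second "iff". For instance, in (3) the $\treg$-U-$\alpha$-$\beta$-UFR condition equals the $\treg$-$\alpha$-$\beta$-UFR condition by Theorem \ref{thm: treg-U}(3), which in turn equals the $\tau$-regular-UFR condition by Theorem \ref{thm: treg-UFR}(2); the other items match up analogously. For the refinability statement at the end, I would simply cite the concluding clause of Theorem \ref{thm: treg-U} together with Theorem \ref{thm:reg-ffr}, which already establishes the corresponding equivalences on the $\tau$-regular side.

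The main obstacle is purely organizational rather than mathematical: one needs to verify that the numbering of items in the corollary matches the numbering in both Theorem \ref{thm: treg-U} and Theorem \ref{thm: treg-UFR}, and that the restricted parameter ranges fit both theorems simultaneously. Once this bookkeeping is done, the proof reduces to two sentences per item, and the body of the argument is effectively "apply Theorem \ref{thm: treg-U}, then apply Theorem \ref{thm: treg-UFR}."
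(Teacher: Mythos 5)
Your proposal is correct and matches the paper's own proof, which likewise disposes of each item by citing Theorem \ref{thm: treg-U} for the first equivalence and Theorem \ref{thm: treg-UFR} for the second. The only difference is that you spell out the bookkeeping (the mismatched item numbering and the restriction of $\alpha$, $\beta$ to exclude the very strong variants) that the paper leaves implicit.
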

\begin{proof} The first equivalence in each statement (i) for $1 \leq i \leq 9$ follows directly from Theorem \ref{thm: treg-U}.  Similarly, the second equivalence in each statement (i) for $1 \leq i \leq 9$ follows from Theorem \ref{thm: treg-UFR}.  
\end{proof}
\indent We conclude the article with a diagram which summarizes many of the equivalences and relationships demonstrated thus far where $\treg$ is defined as above, $\alpha \in \{$atomic, strongly atomic, m-atomic, unrefinably atomic $\}$, $\beta \in \{$ associate, strongly associate $\}$, and $\nabla$ represents $\tau$ is refinable.
\small{$$\xymatrix{   &   \treg \text{-U-} \alpha \text{-HFR} \ar@{<=>}[d] &   &  & \\
 \treg \text{-U-} \alpha \text{-} \beta \text{-UFR}\ar@{<=>}[d]  &  \treg \text{-} \alpha \text{-HFR} \ar@{<=>}[d]  & \treg \text{-U-} \text{BFR} \ar@{<=>}[d]   &  \treg{-U-} \text{ACCP} \ar@{<=>}[d] &   \treg{-U-} \alpha \ar@{<=>}[d] \\
     \treg \text{-} \alpha \text{-} \beta \text{-UFR} \ar@{<=>}[d]       &        \tau \text{-r-HFR} \ar@{=>}^{\nabla}[dr]     &    \treg \text{-} \text{BFR} \ar@{<=>}[d]       &   \treg{-} \text{ACCP} \ar@{<=>}[d]   &         \treg \text{-} \alpha \ar@{<=>}[d]                      \\
\tau \text{-r-UFR} \ar@{=>}[ur] \ar@{=>}^{\nabla}[r]  & \tau \text{-r-FFR} \ar@{=>}[r] \ar@{<=>}^{\nabla}[d]  & \tau \text{-r-BFR} \ar@{=>}[r]^{\nabla}& \tau\text{-r-ACCP} \ar@{=>}^{\nabla}[r] & \tau\text{-r-atomic} \\
  \treg\text{-U-}\alpha \text{-} \beta \text{ df ring}   \ar@{<=>}[d]         & \tau \text{-r-WFFR} \ar@{<=>}^{\nabla}[d]                     &   \treg\text{-}\beta \text{-FFR} \ar@{<=>}[ul]     &  \treg\text{-U-}\beta \text{-FFR} \ar@{<=>}[l]                     \\
 \treg\text{-}\alpha \text{-} \beta \text{ df ring}   \ar@{<=>}[r]        & \tau\text{-}\text{r-idf ring} & \treg \text{-} \beta \text{-WFFR} \ar@{<=>}[ul]    &\treg \text{-U-} \beta \text{-WFFR} \ar@{<=>}[l] &   
            }$$}
%\begin{proof} It is immediate that these maps are inverses

\section*{Acknowledgments} 
The author would like to acknowledge both The University of Iowa and Viterbo University for their support while the research involved in this particular article was completed during employment at both universities.

\bibliographystyle{plain}
\bibliography{bibliography}

\end{document}